\newtheorem{thm}{Theorem}
\newtheorem{prop}[thm]{Proposition}
\newtheorem{cor}[thm]{Corollary}
\theoremstyle{remark}
\newtheorem{rmk}[thm]{Remark}
\newtheorem{example}[thm]{Example}
\theoremstyle{definition}
\newtheorem{defi}[thm]{Definition}
\numberwithin{thm}{section} 
\numberwithin{equation}{section}
\newcommand{\Rmnum}[1]{\expandafter\@slowromancap\romannumeral #1@}
\newcommand{\la}{\langle}
\newcommand{\ra}{\rangle}
\newcommand{\vep}{\varepsilon}
\newcommand{\dive}{\operatorname{div}}
\newcommand{\tr}{\operatorname{tr}}
\newcommand{\bpm}{\begin{pmatrix}}
\newcommand{\epm}{\end{pmatrix}}
\newtheorem{theorem}{Theorem}
\theoremstyle{definition}
\newtheorem{example}[theorem]{Example}
\title[Lipschitz and convexity preserving in the Heisenberg group]{\protect{Lipschitz continuity and convexity preserving for solutions of semilinear evolution equations in the Heisenberg group}}
\author[Q. Liu]{Qing Liu}
\address{Qing Liu, Department of Applied Mathematics, Fukuoka University, Fukuoka 814-0180, Japan, {\tt qingliu@fukuoka-u.ac.jp}}
\author[J. J. Manfredi]{Juan J. Manfredi}
\address{Juan J. Manfredi, Department of Mathematics, University of Pittsburgh, Pittsburgh, PA 15260, USA, {\tt manfredi@pitt.edu}}
\author[X. Zhou]{Xiaodan Zhou}
\address{Xiaodan Zhou, Department of Mathematics, University of Pittsburgh, Pittsburgh, PA 15260, USA, {\tt xiz78@pitt.edu}}
\date{\today}
\begin{document}

%\sloppy

%\subjclass[2010]{Primary \ToDo}

%\sloppy

%\dsp

\begin{abstract}
In this paper we study viscosity solutions of semilinear parabolic equations in the Heisenberg group. We show  uniqueness of viscosity solutions with exponential growth at space infinity. We also study Lipschitz and horizontal convexity preserving properties under appropriate assumptions. Counterexamples show that in general such properties that are well-known for semilinear and fully nonlinear parabolic equations in the Euclidean spaces do not hold in the Heisenberg group.
\end{abstract}

\subjclass[2010]{49L25, 35J93, 35K93, 49N90}
\keywords{Convexity preserving, viscosity solutions, Heisenberg group}

\maketitle

\section{Introduction}
This paper is concerned with the uniqueness and the Lipschitz and convexity preserving properties for viscous Hamilton-Jacobi equations on the Heisenberg group $\mathbb{H}$:
\begin{numcases}{}
u_t-\tr (A (\nabla^2_Hu)^\ast)+ f(p, \nabla_H u)=0 &\quad \text{ in $\mathbb{H}\times (0, \infty)$},\label{special eqn}\\
u(\cdot, 0)=u_0 &\quad\text{ in $\mathbb{H}$}, \label{special initial}
\end{numcases}
where $A$ is a given $2\times 2$ symmetric positive-semidefinite matrix and the function $f: \mathbb{H}\times \mathbb{R}^2\to \mathbb{R}$ satisfies certain assumptions to be made explicit later.  Here $\nabla_H u$, $(\nabla_H^2 u)^\ast$ are respectively the horizontal gradient and the horizontal symmetrized Hessian of the unknown function $u$ in space, and $u_0$ is a given locally uniformly continuous function in $\mathbb{H}$.  

Many of our results in this work also hold for more general fully nonlinear degenerate parabolic equations of the type
\begin{equation}\label{fully nonlinear eqn}
u_t+F\left(p, \nabla_H u, (\nabla_H^2 u)^\ast\right)=0 \quad\text{ in $\mathbb{H}\times (0, \infty)$}
\end{equation}
under proper regularity assumptions on $F$. We however focus on \eqref{special eqn} for simplicity of exposition.
\subsection{Uniqueness for unbounded solutions}
Motivated by the uniqueness results in $\mathbb{R}^n$ \cite{CNT, BBBL} , we first give a uniqueness result for unbounded viscosity solutions of \eqref{special eqn}--\eqref{special initial}, which is useful in our later discussion about the Lipschitz and convexity preserving properties. To this end, we need the following Lipschitz continuity of $f$.

\begin{enumerate}
\item[(A1)] There exists $L_1>0$ such that
\begin{equation}\label{hamiltonian1}
|f(p, w_1)-f(p, w_2)|\leq L_1 |w_1-w_2|
\end{equation}
for all $p\in \mathbb{H}$ and $w_1, w_2\in \mathbb{R}^2$. 
\item[(A2)] There exists $L_2(\rho)>0$ depending on $\rho>0$ such that
\begin{equation}\label{hamiltonian2}
|f(p, w)-f(q, w)|\leq L_2(\rho) |p\cdot q^{-1}|_G
\end{equation}
for all $p, q\in \mathbb{H}$ with $|p|, |q|\leq \rho$ and all $w\in \mathbb{R}^2$.
\end{enumerate}
Here $|\cdot |_G$ denotes the Kor\'anyi gauge in $\mathbb{H}$, i.e., 
\[
|p|_G=\left((x_p^2+y_p^2)^2+16z_p^2\right)^{1\over 4}
\]
for all $p=(x_p, y_p, z_p)\in \mathbb{H}$. Note that (A2) is not the usual local Lipschitz continuity in $\mathbb{H}$, since the distance between $p, q\in \mathbb{H}$ defined by $d_R(p, q)=|p\cdot q^{-1}|_G$ is invariant only under right translations and therefore not equivalent to the usual gauge metric give by $d_L(p, q)=|p^{-1}\cdot q|_G$ or the Carnot-Carath\'eodory metric; see Section \ref{sec metric} for more details. 

Our comparison principle is as below. 
\begin{thm}[Comparison principle for unbounded solutions]\label{comparison thm}
Assume that the Lipschitz conditions (A1) and (A2) hold. Let $u$ and $v$ be respectively an upper semicontinuous subsolution and a lower semicontinuous supersolution of \eqref{special eqn}. Assume that for any fixed $T>0$, there exist $k>0$ and $C_T>0$ depending on $T$ such that
\begin{equation}\label{exp growth}
u(p, t)-v(p, t)\leq C_T e^{k\langle p\rangle}
\end{equation}
for all $(p, t)\in \mathbb{H}\times [0, T]$, where  
\begin{equation}\label{bracket}
\la p\ra=(1+x^4+y^4+16z^2)^{1\over 4} \quad \text{ for all $p=(x, y, z)\in \mathbb{H}$.}
\end{equation} 
If $u(p, 0)\leq v(p, 0)$ for all $p\in \mathbb{H}$, then $u\leq v$ in $\mathbb{H}\times [0, \infty)$.
\end{thm}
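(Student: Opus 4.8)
The plan is to argue by contradiction with the \emph{doubling of variables} method, combined with two devices tailored to this setting: an exponential spatial barrier that defeats the growth hypothesis \eqref{exp growth}, and a doubling penalty built from the \emph{right}-invariant gauge so that (A2) becomes usable. Fix $T>0$ and suppose, for contradiction, that $u(p_0,t_0)-v(p_0,t_0)>0$ at some $(p_0,t_0)\in\mathbb{H}\times(0,T]$. First I would introduce a time penalty $\gamma/(T-t)$ (forcing the relevant supremum to be attained for $t<T$ and rendering the subsolution inequality strict) and a spatial barrier $\chi(p,t)=e^{\lambda t+K\langle p\rangle}$ with $K>k$. The crucial requirement is that $\chi$ be a supersolution of the linear part of \eqref{special eqn} up to the $L_1$-Lipschitz perturbation, i.e. $\chi_t-\tr\!\big(A(\nabla_H^2\chi)^\ast\big)-L_1|\nabla_H\chi|\ge 0$. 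Dividing by $\chi$ reduces this to
\[
\lambda \ge K\,\tr\!\big(A(\nabla_H^2\langle p\rangle)^\ast\big)+K^2\big\langle A\nabla_H\langle p\rangle,\nabla_H\langle p\rangle\big\rangle+L_1K\,|\nabla_H\langle p\rangle|,
\]
and a direct computation with the explicit horizontal frame shows that $|\nabla_H\langle p\rangle|$ and $(\nabla_H^2\langle p\rangle)^\ast$ are globally bounded — this is precisely why the fourth powers and the coefficient $16$ appear in \eqref{bracket} — so a finite $\lambda$ depending on $\|A\|$, $L_1$, $K$ does the job. Since $u-v\le C_Te^{k\langle p\rangle}=o(\chi)$ as $|p|\to\infty$, subtracting $\beta\chi$ will make the relevant functional tend to $-\infty$ at spatial infinity for every $\beta>0$.

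Next I would double the space variable and set
\[
\Phi_\epsilon(p,q,t)=u(p,t)-v(q,t)-\tfrac1\epsilon\,|p\cdot q^{-1}|_G^4-\beta\chi(p,t)-\tfrac{\gamma}{T-t}.
\]
Using the penalty $|p\cdot q^{-1}|_G^4=(x^2+y^2)^2+16z^2$ evaluated at $p\cdot q^{-1}$ is natural here because it is a \emph{smooth} function on $\mathbb{H}\times\mathbb{H}$, and because boundedness of the penalty forces the right-invariant distance $|\hat p\cdot\hat q^{-1}|_G=d_R(\hat p,\hat q)$ between maximizers to zero; this pairing of the doubling penalty with $d_R$ is exactly what lets (A2) control the $p$-dependence of $f$. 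For $\epsilon,\beta,\gamma$ small the barrier and the doubling penalty together localize the maximizer $(\hat p,\hat q,\hat t)$ in a compact set with $\hat t>0$, and the standard penalization estimate gives $\epsilon^{-1}|\hat p\cdot\hat q^{-1}|_G^4\to 0$ as $\epsilon\to0$.

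At the interior maximizer I would invoke the parabolic theorem on sums to obtain a common time slope and symmetric matrices $X,Y$ in the Euclidean second-order jets of $u$ and $v$, satisfying the usual inequality bounding $\operatorname{diag}(X,-Y)$ by the Hessian of the penalty (plus a regularizing error). Projecting these Euclidean matrices onto the horizontal distribution through the coefficient matrix of the frame $X_1,X_2$ converts them into the horizontal symmetrized Hessians $(\nabla_H^2 u)^\ast$, $(\nabla_H^2 v)^\ast$, and positive-semidefiniteness of $A$ then yields $\tr(A(\nabla_H^2u)^\ast)-\tr(A(\nabla_H^2v)^\ast)=o(1)$. Subtracting the two viscosity inequalities, the first-order terms $f(\hat p,\cdot)-f(\hat q,\cdot)$ are dominated by (A1), which absorbs the horizontal-gradient mismatch (of size $O(\epsilon^{-1}|\hat p\cdot\hat q^{-1}|_G^3)$ after the frame corrections), and by (A2), which bounds the explicit $p$-dependence by $L_2(\rho)\,|\hat p\cdot\hat q^{-1}|_G\to 0$. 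The strict sign supplied by $\gamma/(T-t)$ and by the supersolution property of $\beta\chi$ contradicts these vanishing estimates. Letting $\epsilon\to0$, then $\beta\to0$, then $\gamma\to0$ gives $u\le v$ on $\mathbb{H}\times[0,T]$, and since $T$ is arbitrary the theorem follows.

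The genuinely Heisenberg-specific difficulty, and the step I expect to be the main obstacle, is the passage from the Euclidean Crandall–Ishii matrices to the horizontal Hessians. Because the horizontal vector fields carry a $z$-dependent drift, the frame coefficients are unbounded in the vertical variable, so one must verify that the anisotropic penalty $\epsilon^{-1}|p\cdot q^{-1}|_G^4$ keeps $\hat p$ and $\hat q$ close in the correct scaling — in particular that the vertical discrepancy does not spoil the first-order comparison, i.e. that $\epsilon^{-1}|\hat p\cdot\hat q^{-1}|_G^3$ times the frame correction tends to zero. A secondary, but still delicate, point is the barrier computation: $(\nabla_H^2\langle p\rangle)^\ast$ has entries that a priori grow, and only the precise algebraic matching between the $16z^2$ term in $\langle p\rangle$ and the structure constants of $\mathbb{H}$ keeps them bounded so that a single constant $\lambda$ works globally.
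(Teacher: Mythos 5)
Your overall architecture coincides with the paper's: contradiction via doubling of variables, the penalty $\varepsilon^{-1}|p\cdot q^{-1}|_G^4$ built from the right-invariant gauge, an exponential barrier $e^{\lambda t+K\langle p\rangle}$ whose viability rests on the global boundedness of $\nabla_H\langle p\rangle$ and $(\nabla_H^2\langle p\rangle)^\ast$, the Heisenberg adaptation of the Crandall--Ishii lemma, and (A1)/(A2) to close the first-order terms. The minor organizational differences (attaching the barrier only to the $p$-variable and packaging its role as ``$\chi$ is a supersolution of the linearized equation'' rather than choosing $\alpha$ large at the end, as the paper does) are immaterial.

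However, there is a genuine gap at precisely the step you flag as the main obstacle, and the quantitative claim you make there does not close. You assert that the horizontal-gradient mismatch of the penalty between the $p$- and $q$-slots is of size $O(\varepsilon^{-1}|\hat p\cdot\hat q^{-1}|_G^{3})$ and that (A1) absorbs it. But the only smallness you have established is $\varepsilon^{-1}|\hat p\cdot\hat q^{-1}|_G^{4}\to 0$, and $\varepsilon^{-1}d^{3}=(\varepsilon^{-1}d^{4})/d$ is an indeterminate $0/0$ expression that need not vanish (indeed it is only $o(\varepsilon^{-1/4})$). If the mismatch really were of that order, the $L_1$-term would not be controllable and the argument would fail. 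What actually saves the proof --- and what the paper verifies --- is that for the right-invariant penalty $\varphi_\varepsilon(p,q)=\varepsilon^{-1}|p\cdot q^{-1}|_G^4$ differentiated with the left-invariant frame one has the \emph{exact} identity $\nabla_H^p\varphi_\varepsilon=-\nabla_H^q\varphi_\varepsilon$, so there is no mismatch at all; (A1) is only needed for the difference of the barrier gradients, which is $O(\sigma\beta\mu K)$ and is absorbed into the choice of $\lambda$ (the paper's $\alpha$). A parallel point applies to the second-order terms: the penalty's contribution to $\operatorname{tr}(AX)-\operatorname{tr}(AY)$ is not generically $o(1)$; it is controlled by the specific Bieske-type estimate bounding $\langle(M_1+\lambda M_1^2)w_{p_\varepsilon}\oplus w_{q_\varepsilon},w_{p_\varepsilon}\oplus w_{q_\varepsilon}\rangle$ by $C\varepsilon^{-1}|w|^2\delta_3^2$, where $\delta_3=z_{p_\varepsilon}-z_{q_\varepsilon}-\tfrac12 x_{p_\varepsilon}y_{q_\varepsilon}+\tfrac12 x_{q_\varepsilon}y_{p_\varepsilon}$ is the vertical component of $p_\varepsilon\cdot q_\varepsilon^{-1}$, so that $16\delta_3^2\le|p_\varepsilon\cdot q_\varepsilon^{-1}|_G^4$ and the term vanishes with $\varepsilon^{-1}d_R^4$. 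Without these two explicit computations your sketch does not yield a proof.
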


As an immediate consequence (Corollary \ref{uniqueness thm}), viscosity solutions of \eqref{special eqn} are unique within the class of functions satisfying the following exponential growth condition at infinity:

\begin{enumerate}
\item[(G)] For any $T>0$, there exists $k>0$ and $C_T>0$ such that $|u(p, t)| \leq C_T e^{k\langle p\rangle}$ for all $(p, t)\in \mathbb{H}\times [0, T]$.
\end{enumerate}

Uniqueness of viscosity solutions of various nonlinear equations in the Heisenberg group are studied in \cite{Bi1, MSt, Bi2,  Wa2, Mnotes} etc. It turns out that one may extend the Euclidean viscosity theory (e.g., \cite{CIL}) to sub-Riemannian manifolds. But most of these results are either for a bounded domain or for bounded solutions. It is less understood when the domain and solution are both unbounded in the Heisenberg group. To the best of our knowledge, the only known result on uniqueness for time-dependent equations in this case is due to Haller Martin \cite{HE}, where a comparison principle is established for a class of nonlinear parabolic equations including the horizontal Gauss curvature flow of graphs in the Carnot group. The comparison principle in \cite{HE} is for solutions with polynomial growth at infinity while ours is for exponential growth, but our assumptions on the structure of the equations are stronger.

%The operator $F$ is assumed to be uniformly continuous and satisfy the ellipticity:
%\begin{equation}\label{ellipticity}
%F(p, X)\geq F(p, Y) \quad \text{ for all $p\in \mathbb{R}^2$ and $X, Y\in \mathbf{S}^2$ with $X\leq Y$}.
%\end{equation}
\subsection{Lipschitz and convexity preserving}

In the Euclidean space, Lipschitz continuity and convexity preserving are two very important properties, closely related to the maximum or comparison principle, which hold for a large class of linear and nonlinear parabolic equations: when the initial value $u_0$ is Lipschitz continuous (resp., convex), the unique solution $u(x, t)$ is Lipschitz continuous (resp., convex) in $x$ as well for any $t\geq 0$. Concerning the convexity preserving property in $\mathbb{R}^n$, we refer the reader to \cite{Kor, K2, Sak, GGIS, ALL, Gbook, Ju} for a standard PDE approach in different contexts based on convexity (or concavity) maximum principles and \cite{LSZ} for proofs using the discrete games introduced in \cite{KS1, PSSW, MPR1}. 

In what follows, assuming appropriate growth conditions for the initial value $u_0$ and its derivatives, we sketch a proof of these properties for the unique smooth solution of the classical heat equation:
\begin{equation}
u_t-\Delta u= 0 \quad \text{ in $\mathbb{R}^n\times (0, \infty)$}, \label{nd heat}\\
\end{equation}
with $u(\cdot, 0)=u_0(\cdot )$ in $\mathbb{R}^n$, where $\Delta u$ denotes the usual (Euclidean) Laplacian operator acted on $u$.

By differentiating the equation with respect to the space variables, one may easily see that each of the components of $\nabla u$ satisfies the heat equation \eqref{nd heat}, which, by the maximum principle, implies that $\nabla u(\cdot , t)$ is bounded for any $t\geq 0$ if $\nabla u_0$ is bounded in $\mathbb{R}^n$.

A similar argument works for the convexity preserving property. Indeed, it is not difficult to find that, for any fixed vector $w\in \mathbb{R}^n$, $\la \nabla^2 u w, w\ra$ satisfies the heat equation. One may apply the maximum principle again to show $\la \nabla^2 u(\cdot , t) w, w\ra\geq 0$ for any $t\geq 0$ if it holds initially, which is equivalent to the statement of convexity preserving.

We intend to extend these preserving properties to nonlinear equations in the Heisenberg group $\mathbb{H}$. Notions and properties of Lipschitz continuity and convexity in the Heisenberg group are available in the literature \cite{DGN, LMS, JLMS}. In fact, a function $u$ is said to be Lipschitz continuous in $\mathbb{H}$ if there exists $L>0$ such that
\[
|u(p)-u(q)|\leq Ld_L(p, q)
\]
for all $p, q\in \mathbb{H}$, and $u$ is said to be horizontally convex in $\mathbb{H}$ if
\[
u(p\cdot h^{-1})+u(p\cdot h)\geq 2u(p)
\]
for any $p\in \mathbb{H}$ and any $h\in \mathbb{H}_0$, where 
\[
\mathbb{H}_0=\{h\in \mathbb{H}: h=(h_1, h_2, 0) \text{ for $h_1, h_2\in \mathbb{R}$}\}.
\]
It is clear that  Lipschitz continuity and horizontal convexity are both left invariant.

It is worth stressing that our generalization is by no means immediate. As observed above, besides necessary applications of a comparison principle, the key in the straightforward {proofs} for the Euclidean case lies at differentiating the equation and interchanging derivatives. This is however not applicable directly in the Heisenberg group, since the mixed second derivatives in the Heisenberg group are not commutative in general. In fact, our counterexamples show that preserving of Lipschitz continuity and horizontal convexity may fail even for very simple linear equations; see Examples \ref{ex lip pre} and \ref{linear first example} for the linear equation 
\begin{equation}\label{linear first eqn}
u_t-\langle h_0, \nabla_H u\rangle=0  \quad \text{in $\mathbb{H}$},
\end{equation}
where $h_0\in \mathbb{R}^2$ is given. Its unique viscosity solution turns out to be right translations of the initial value. 

Since the horizontal gradient $\nabla_H u$ and horizontal Hessian $\nabla_H^2 u$ are not  in general right invariant but  only left invariant, we cannot rely on the symmetry of second derivatives for our study of Lipschitz and convexity preserving properties. 

On the other hand, there are many examples on Lipschitz and convexity preserving in the Heisenberg group. One sufficient condition for the equivalence between Lipschitz continuity/horizontal convexity of a function with respect to both metrics $d_L$ and $d_R$ is evenness or vertical evenness of the function; see Definition \ref{def even}, Proposition \ref{prop evenness} and Proposition \ref{evenness lemma}.  Another sufficient condition for the equivalence of both convexity notions is a separable structure of the function (Proposition \ref{separability lemma}).

We thus can obtain the Lipschitz continuity and convexity preserving properties by first investigating them with respect to the right invariant metric $d_R$ and then using the additional assumptions above. Let us present our results in a simpler case.

\begin{thm}[Preserving of right invariant Lipschitz continuity]\label{thm0 lip}
Assume that $f: \mathbb{R}^2\to \mathbb{R}$ is Lipschitz. Let $u\in C(\mathbb{H}\times [0, \infty))$ be the unique solution of 
\begin{equation}\label{simple eqn}
u_t-\tr (A (\nabla^2_Hu)^\ast)+ f(\nabla_H u)=0 \quad \text{ in $\mathbb{H}\times (0, \infty)$},
\end{equation}
with $u(\cdot, 0)=u_0(\cdot)$ satisfying the growth condition (G). If there exists $L>0$ such that
\[
|u_0(p)-u_0(q)|\leq Ld_R(p, q)
\]
for all $p, q\in \mathbb{H}$, then
\[
|u(p, t)-u(q, t)|\leq Ld_R(p, q)
\]
for all $p, q\in \mathbb{H}$ and $t\geq 0$.
\end{thm}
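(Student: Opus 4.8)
The plan is to deduce the right-invariant Lipschitz bound from the comparison principle (Theorem~\ref{comparison thm}) by exploiting a precise matching between the \emph{left}-invariance of the equation and the \emph{right}-invariance of the metric $d_R$. The decisive observation is that, since $f$ depends only on $\nabla_H u$, the matrix $A$ is constant, and both $\nabla_H$ and $(\nabla_H^2\,\cdot\,)^\ast$ are left-invariant, equation \eqref{simple eqn} is invariant under left translations: if $u$ solves \eqref{simple eqn}, then so does $p\mapsto u(g\cdot p, t)$ for every fixed $g\in\mathbb{H}$. On the other hand, $d_R$ is right-invariant and satisfies $d_R(g\cdot p, p)=|(g\cdot p)\cdot p^{-1}|_G=|g|_G$, independently of $p$. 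Thus the increment produced by a left translation is measured uniformly by the right distance, which is exactly why $d_R$-Lipschitz continuity is the quantity propagated by this equation.

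Concretely, I would fix $g\in\mathbb{H}$ and set $w(p,t)=u(g\cdot p, t)-L|g|_G$. Since \eqref{simple eqn} contains no zeroth-order term in $u$, subtracting a constant keeps $w$ a solution, and by left-invariance $w$ solves \eqref{simple eqn} as well. At the initial time the right-invariant Lipschitz hypothesis on $u_0$ gives $u_0(g\cdot p)-u_0(p)\leq L\,d_R(g\cdot p, p)=L|g|_G$, hence $w(p,0)\leq u_0(p)=u(p,0)$ for all $p$. Treating $w$ as a subsolution and $u$ as a supersolution, an application of Theorem~\ref{comparison thm} then yields $w\leq u$ on $\mathbb{H}\times[0,\infty)$, that is, $u(g\cdot p, t)-u(p, t)\leq L|g|_G$ for all $p$ and $t\geq 0$. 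Applying this bound both to $(g,p)$ and to $(g^{-1}, g\cdot p)$ and using the symmetry $|g^{-1}|_G=|g|_G$ gives $|u(g\cdot p, t)-u(p, t)|\leq L|g|_G$; taking $g=p\cdot q^{-1}$ for an arbitrary pair $p,q$ produces $|u(p,t)-u(q,t)|\leq L|p\cdot q^{-1}|_G=L\,d_R(p,q)$, as claimed.

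The step requiring genuine care is verifying the hypotheses of Theorem~\ref{comparison thm}, specifically the exponential growth condition \eqref{exp growth} for the difference $w-u$. Since (G) controls $u$ by $C_T e^{k\langle p\rangle}$, I would need to bound $\langle g\cdot p\rangle$ in terms of $\langle p\rangle$ for fixed $g$; using that $\langle\,\cdot\,\rangle$ is comparable to $1+|\,\cdot\,|_G$ together with the triangle inequality $|g\cdot p|_G\leq |g|_G+|p|_G$ for the Kor\'anyi gauge, one obtains $\langle g\cdot p\rangle\leq C(g)+\langle p\rangle$, so that $w-u$ retains exponential growth with the same rate $k$ and the comparison principle indeed applies. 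I expect this growth bookkeeping, together with checking that the left-translation invariance is valid at the level of semicontinuous sub- and supersolutions (which follows because left translations are smooth diffeomorphisms preserving the horizontal frame, hence act cleanly on test functions), to be the only technical obstacles; the conceptual content is entirely contained in the left/right invariance matching described above.
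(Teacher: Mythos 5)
Your proposal is correct and follows essentially the same route as the paper: the paper's proof (of the more general Theorem~\ref{thm lip pre}, of which Theorem~\ref{thm0 lip} is the special case $L_0=0$) likewise observes that the left-translate $u(h^{-1}\cdot p,t)$ shifted by the constant $L|h|_G$ is a supersolution, compares it with $u$ via Theorem~\ref{comparison thm}, and uses $d_R(h^{-1}\cdot p,p)=|h|_G$ exactly as you do (your subsolution $w(p,t)=u(g\cdot p,t)-L|g|_G$ is the same construction with the roles of sub- and supersolution swapped). Your explicit verification that the growth condition \eqref{exp growth} survives a fixed left translation is a detail the paper leaves implicit, and it checks out since (G) permits an arbitrary exponential rate $k$.
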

Theorem \ref{thm0 lip} is a direct application of Theorem \ref{comparison thm}. A more general version is given below  in Theorem \ref{thm lip pre}. It implies the Lipschitz preserving property of an even function or vertically even function (Corollary \ref{cor lip}). 

For the case of first order Hamilton-Jacobi equations ($A=0$), if in addition we assume that $f:\mathbb{R}^2\to \mathbb{R}$ is in the form that $f(\xi)=m(|\xi|)$ with $m: \mathbb{R}\to \mathbb{R}$ locally uniformly continuous, then the Lipschitz preserving property of a bounded solution can be directly shown without the evenness assumption. We refer the reader to Theorem \ref{thm lip HJ}, which answers a question asked in \cite{MSt}. A more general question on Lipschitz continuity of viscosity solutions was posed in \cite{BCP}, but it is not clear if our method here immediately applies to that general setting.

As for the h-convexity preserving property, we obtain the following. 
\begin{thm}[Right invariant h-convexity preserving]\label{thm0 left h-convexity}
 Assume that $f: \mathbb{R}^2\to \mathbb{R}$ is Lipschitz. Let $u\in C(\mathbb{H}\times [0, \infty))$ be the unique solution of \eqref{simple eqn} with $u(\cdot, 0)=u_0(\cdot)$ satisfying the growth condition (G). Assume in addition that $f$ is concave in $\mathbb{R}^2$, i.e., 
\begin{equation}\label{assumption operator0}
f(\xi)+f(\eta)\leq 2f\left({1\over 2}(\xi+\eta)\right)
\end{equation}
for all $\xi, \eta\in \mathbb{R}^2$. If $u_0$ is right invariant h-convex in $\mathbb{H}$; that is,
\[
u_0(h^{-1}\cdot p)+u_0(h\cdot p)\geq 2u(p)
\]
for all $p\in \mathbb{H}$ and $h\in \mathbb{H}_0$, then so is $u(\cdot, t)$ for all $t\geq 0$.
\end{thm}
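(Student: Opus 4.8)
The plan is to fix $h\in\mathbb{H}_0$ and reduce the right invariant h-convexity of $u(\cdot,t)$ to a comparison between $u$ and the average of two of its left translates. Set
\[
u_1(p,t)=u(h^{-1}\cdot p,t),\qquad u_2(p,t)=u(h\cdot p,t),\qquad w=\tfrac12(u_1+u_2).
\]
Because the horizontal vector fields are left invariant, for any fixed $a\in\mathbb{H}$ the function $p\mapsto u(a\cdot p,t)$ satisfies $\nabla_H\big(u(a\cdot\,)\big)(p)=(\nabla_H u)(a\cdot p)$ and $\big(\nabla_H^2(u(a\cdot\,))\big)^\ast(p)=\big((\nabla_H^2 u)^\ast\big)(a\cdot p)$, while its time derivative is that of $u$ at $a\cdot p$. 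Since the coefficients $A$ and $f$ in \eqref{simple eqn} do not depend on the space variable, it follows that $u_1$ and $u_2$ are again viscosity solutions of \eqref{simple eqn}; this left invariance is exactly what forces us into the simplified, $p$-independent setting. The asserted conclusion $u(h^{-1}\cdot p,t)+u(h\cdot p,t)\ge 2u(p,t)$ is precisely the inequality $w\ge u$ on $\mathbb{H}\times[0,\infty)$, so it suffices to establish this via the comparison principle.

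To set up the comparison I would show that $w$ is a viscosity supersolution of \eqref{simple eqn}, while $u$, being a solution, is a subsolution. Formally, using that $u_1,u_2$ solve the equation together with the linearity of $M\mapsto\tr(AM)$,
\[
w_t-\tr\!\big(A(\nabla_H^2 w)^\ast\big)+f(\nabla_H w)=f\!\Big(\tfrac12(\nabla_H u_1+\nabla_H u_2)\Big)-\tfrac12\big(f(\nabla_H u_1)+f(\nabla_H u_2)\big)\ge 0,
\]
where the final inequality is exactly the concavity assumption \eqref{assumption operator0}. The \emph{main obstacle} is to justify this in the viscosity sense: a smooth test function touching $w=\frac12(u_1+u_2)$ from below does not split into test functions for $u_1$ and $u_2$ separately. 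I would handle this in the standard way, by doubling the space variable, adding a penalization $|p-q|^2/\vep$ to force the two copies together while evaluating the test function at the midpoint, and invoking the maximum principle for semicontinuous functions (the theorem on sums, as in \cite{CIL}) to produce admissible second-order subjets of $u_1$ and of $u_2$ at the respective contact points. The linearity in the Hessian lets the two second-order contributions combine cleanly, and the concavity of $f$ delivers the supersolution inequality in the limit $\vep\to0$; this is the analogue in $\mathbb{H}$ of the convexity arguments of \cite{ALL}.

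Finally, I would verify the hypotheses of Theorem \ref{comparison thm}. Since $u$ obeys the growth condition (G), and a fixed left translation changes $\la p\ra$ only by a multiplicative constant in the exponent—one checks directly from \eqref{bracket} that $\la a\cdot p\ra\le C_a\la p\ra$ for each fixed $a$, controlling the quadratic $z$-term of $a\cdot p$ by $x^4,y^4,z^2$—the translates $u_1,u_2$, and hence $w$, also satisfy (G), so $u-w$ meets the exponential bound \eqref{exp growth}. At the initial time, $w(p,0)=\frac12\big(u_0(h^{-1}\cdot p)+u_0(h\cdot p)\big)\ge u_0(p)=u(p,0)$ by the right invariant h-convexity of $u_0$. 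Applying Theorem \ref{comparison thm} with subsolution $u$ and supersolution $w$ yields $u\le w$ on $\mathbb{H}\times[0,\infty)$, that is, $u(h^{-1}\cdot p,t)+u(h\cdot p,t)\ge 2u(p,t)$. As $h\in\mathbb{H}_0$ was arbitrary, $u(\cdot,t)$ is right invariant h-convex for every $t\ge0$.
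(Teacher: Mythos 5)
Your overall strategy is sound and, once unpacked, it coincides with what the paper does for the general version of this statement (Theorem \ref{thm left h-convexity}); the difference is mainly one of packaging. You fix $h$, observe that $u_1(p,t)=u(h^{-1}\cdot p,t)$ and $u_2(p,t)=u(h\cdot p,t)$ again solve \eqref{simple eqn} by left invariance of $X_1,X_2$ (correct, and this is exactly the paper's observation about its $u_-$ and $u_+$), claim that $w=\frac12(u_1+u_2)$ is a supersolution via the concavity of $f$, and then invoke Theorem \ref{comparison thm}; your verifications of the growth condition for the translates and of the inequality at $t=0$ are also correct. The paper instead runs a single contradiction argument with three space variables $p,q,r$ (the two translated points and the center), keeps $h$ as an additional penalized variable, and applies the Heisenberg Crandall--Ishii lemma to the tripled function directly. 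Your two-step modularization is arguably cleaner for the $p$-independent equation \eqref{simple eqn}, while the paper's one-shot argument is the form that extends to $f$ depending on $p$ under the right invariant concavity \eqref{assumption operator}.

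The caveat is that essentially all of the difficulty is concentrated in the sentence where you call the supersolution step ``standard,'' and two of the specific devices you name would have to be changed. First, there is no useful midpoint map in $\mathbb{H}$ for this purpose: if you evaluate the test function at some $m(p,q)$, the horizontal gradients $\nabla_H^p\bigl[\phi(m(p,q))\bigr]$ and $\nabla_H^q\bigl[\phi(m(p,q))\bigr]$ do not average to $\nabla_H\phi$ at $m$, because $X_1,X_2$ depend on the base point; the fix is either to evaluate $\phi$ at $p$ alone or, as the paper does, to introduce a third free variable $r$ penalized to be near both $p$ and $q$. Second, the penalization should be $\varphi_\vep(p,q)=|p\cdot q^{-1}|_G^4/\vep$, the fourth power of the \emph{right} invariant gauge, because this is precisely the function for which $\nabla_H^p\varphi_\vep=-\nabla_H^q\varphi_\vep$ exactly; this identity is what lets the two gradient jets recombine into $\nabla_H\phi$ so that the concavity of $f$ can be applied, and it fails for $d_L$ and is only approximate for the Euclidean $|p-q|^2$. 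One must then still bound the quadratic form of $M+\lambda M^2$ on vectors of the form $w_{p_\vep}\oplus w_{q_\vep}$, which is the Bieske-type estimate \eqref{bieske} together with the computer-assisted bounds \eqref{heisenberg second0}--\eqref{heisenberg second2} that occupy most of the paper's proofs. With those substitutions your argument goes through, but as written the supersolution step is a plan rather than a proof.
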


The convexity preserving property for solutions that are either even or in a separable form follows easily (Corollary \ref{cor convexity}). 

Our study of the convexity preserving property in the Heisenberg group is also inspired by recent works on horizontal mean curvature flow in sub-Riemannian manifolds \cite{CC, FLM1}. The mean curvature flow in $\mathbb{R}^n$ is known to preserve convexity \cite{Hui}, but it is not clear if such a property also holds in $\mathbb{H}$ in general.  Our analysis about convexity is only for the simpler equation \eqref{special eqn}. However, an explicit solution of the mean curvature flow in $\mathbb{H}$  that does preserve convexity can be found in Example \ref{example mcf}; see also \cite{FLM1}. 

In the proof of Theorem \ref{thm0 left h-convexity}, we show a convexity maximum principle, following the proof of Theorem \ref{comparison thm}. A general version of this theorem for the equation \eqref{special eqn} is given in Theorem \ref{thm left h-convexity}, where $f: \mathbb{H}\times \mathbb{R}^2$ is assumed to be (right invariant) concave.  One may further generalize this result for \eqref{fully nonlinear eqn} by assuming that $F$ is concave in all arguments. We remark that in the Euclidean case as studied in \cite{GGIS, Ju} etc., there is no need to assume \eqref{assumption operator0}. We need this assumption due to lack of an equivalent definition of horizontal convexity in terms of averages of endpoints. More precisely, convexity of a function $u\in C(\mathbb{R}^n)$ can be expressed by 
\[
u(\xi)+u(\eta)\geq 2u\left({\xi+\eta\over 2}\right)
\] 
for any $\xi, \eta\in \mathbb{R}^n$. However, for horizontal convexity in $\mathbb{H}$ there is no such ``global'' expression valid for all pairs $p, q\in \mathbb{H}$ that are only horizontally related, i.e., $p=q\cdot h$ for $h\in \mathbb{H}_0$. It is not clear to us if this assumption can be dropped in our theorem. 

This paper is organized in the following way. In Section \ref{sec prelim}, we present some basic and useful facts about the Heisenberg group, including an introduction of its metrics, Lipschitz continuity and horizontal convexity. In Section \ref{sec unique}, we give a proof of Theorem \ref{comparison thm} and also include an existence result at the end. The Lipschitz preserving property is studied in Section \ref{sec lip}. Section \ref{sec convex} is dedicated to a discussion of convexity preserving property with several explicit examples in Section \ref{sec example}. 

We thank the referee for a careful review and for helpful comments that improved the readability of the paper.

\section{Preliminaries}\label{sec prelim}
\subsection{ Review of the Heisenberg group $\mathbb{H}$}
Recall that the Heisenberg group $\mathbb{H}$ is $\mathbb{R}^{3}$ endowed with the non-commutative group multiplication 
\[
(x_p, y_p, z_p)\cdot (x_q, y_q, z_q)=\left(x_p+x_q, y_p+y_q, z_p+z_q+\frac{1}{2}(x_py_q-x_qy_p)\right),
\]
for all $p=(x_p, y_p, z_p)$ and $q=(x_q, y_q, z_q)$ in $\mathcal{H}$. Note that the group inverse
of $p=(x_q, y_q, z_q)$ is  $p^{-1}=(-x_q, -y_q, -z_q)$. The Kor\'anyi gauge is given by
\[
|p|_G=((p_1^2+p_2^2)^2+16 p_3^2)^{1/4},
\]
and the left-invariant Kor\'anyi or gauge metric is 
\[
d_L(p, q)=|p^{-1}\cdot q|_G.
\]
The Lie Algebra of $\mathbb{H}$ is generated by the left-invariant vector fields
\[
\begin{aligned}
& X_1=\frac{\partial}{\partial x}-\frac{y}{2}\frac{\partial}{\partial z};\\
& X_2=\frac{\partial}{\partial y}+\frac{x}{2}\frac{\partial}{\partial z};\\
& X_3=\frac{\partial}{\partial z}.
\end{aligned}
\]
  One may easily verify the commuting relation $X_3=[X_1, X_2]=
  X_{1}X_{2}- X_{2}X_{1}$.
  
%Denote by $\mathbf{S}^n$ the set of all symmetric $n\times n$ matrices.
The horizontal gradient  of $u$ is given by 
\[
\nabla_H u=(X_1 u, X_2 u)
\]
and the symmetrized second horizontal Hessian  $(\nabla_H^2 u)^\ast\in S^{2\times 2}$ is given by  
\[
(\nabla_H^2 u)^\ast:=\left(\begin{array}{cc} X_1^2 u & (X_1X_2 u+X_2X_1 u)/2\\
(X_1X_2 u+X_2X_1 u)/2 & X_2^2 u\end{array}\right).
\]
Here $S^{n\times n}$ denotes the set of all $n\times n$ symmetric matrices.
%We conclude our calculations for the second derivatives of $f$ by pointing out that $%
%(\nabla_H^{2, p} f)^\ast=(\nabla_H^{2, q} f)^\ast$. 

A piecewise smooth curve $s\mapsto \gamma(s)\in \mathbb{H}$ is called horizontal if its tangent vector $\gamma'(s)$ is in the linear span of $\{X_1(\gamma(s)), X_2(\gamma(s))\}$ for every $s$ such that $\gamma'(s)$ exists; in other words, there exist $a(s), b(s)\in \mathbb{R}$ satisfying 
\[
\gamma'(s)=a(s)X_1(\gamma(s))+b(s)X_2(\gamma(s))
\]
whenever $\gamma'(s)$ exists.
We denote 
\[
\|\gamma'(s)\|=\left(a^2(s)+b^2(s)\right)^{1\over 2}.
\]
Given $p, q\in \mathbb{H}$, denote
\[
\Gamma(p, q)=\{\text{horizontal curves $\gamma(s)$ ($s\in [0, 1]$): $\gamma(0)=p$ and $\gamma(1)=q$}\}.
\]
Chow's theorem states that $\Gamma(p, q)\neq \emptyset$; see, for example, \cite{BR}. The Carnot-Carath\'eodory metric is then defined to be
\[
d_{CC}(p, q)=\inf_{\gamma\in \Gamma(p, q)}\int_0^1 \|\gamma'(s)\|\, ds.
\]

\subsection{Metrics on $\mathbb{H}$}\label{sec metric}
Besides the left-invariant Kor\'anyi metric $d_L$ and Carnot-Carath\'eodory metric $d_{CC}$, the function $d_R(p, q)=|p\cdot q^{-1}|_G$ for any $p, q\in \mathbb{H}$ defines another metric on $H$, which is right invariant;  in fact, $d_R(p, q)=d_L(p^{-1}, q^{-1})$ for any $p, q\in \mathbb{H}$. 

It is known that $d_L$ is bi-Lipschitz equivalent to the Carnot-Carath\'eodory metric $d_{CC}$ \cite{CDPT, Mnotes}. The metrics $d_L$ and $d_R$ are not bi-Lipschitz equivalent, which is indicated in the example below.

\begin{example}\label{lip ex1}
One may choose 
\[
p=(1-\vep, 1+\vep, {\vep}), \quad q=(1, 1, 0)
\]
with $\vep>0$ small, then by direct calculation, we have $d_L(p, q)^4=|q^{-1}\cdot p|_G^4=4\vep^4$ and $d_R(p, q)^4=|p\cdot q^{-1}|_G^4=4\vep^4+64\vep^2$, which indicates that one cannot expect the existence of a constant $C>0$ such that $d_R(p, q)\leq Cd_L(p, q)$ for all $p, q\in \mathbb{H}$. A variant of this example shows that the reverse inequality also fails in general. 
\end{example}

Although the metrics above are not bi-Lipschitz equivalent, it turns out that one is locally H\"{o}lder continuous in the other. 

\begin{prop}\label{prop holder}
For any $\rho>0$, there exists $C_\rho>0$ such that 
\begin{equation}\label{holder1}
d_L(p, q)\leq C_\rho d_R(p, q)^{1\over 2}
\end{equation}
and
\begin{equation}\label{holder2}
d_R(p, q)\leq C_\rho d_L(p, q)^{1\over 2}
\end{equation}
for any $p, q\in \mathbb{H}$ with $|p|, |q|\leq \rho$.
\end{prop}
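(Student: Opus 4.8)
The plan is to reduce both inequalities to a single elementary estimate on the ``twist'' produced by the non-commutativity of the group law. First I would write out the two products explicitly. With $p=(x_p,y_p,z_p)$ and $q=(x_q,y_q,z_q)$, set $h^2=(x_p-x_q)^2+(y_p-y_q)^2$, $c=z_p-z_q$, and
\[
T=\tfrac12\bigl(x_py_q-x_qy_p\bigr).
\]
A direct computation of $p^{-1}\cdot q$ and $p\cdot q^{-1}$ from the group law shows that the two differ only through the sign of $T$ in the vertical coordinate, so that
\[
d_L(p,q)^4=h^4+16(c+T)^2,\qquad d_R(p,q)^4=h^4+16(c-T)^2.
\]
(One checks these formulas against Example~\ref{lip ex1}.) Thus the two gauges share the same horizontal part $h^4$ and differ only in the $\pm T$ contribution, and proving the proposition amounts to controlling this discrepancy in terms of the horizontal displacement.

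The key step, and the only place where $|p|,|q|\le\rho$ is used essentially, is the estimate
\[
T^2\le \frac{\rho^2}{4}\,h^2.
\]
To obtain it I would rewrite $2T=x_py_q-x_qy_p=y_p(x_p-x_q)-x_p(y_p-y_q)$, so that $2T$ is the inner product of the vector $(y_p,-x_p)$ with the horizontal displacement $(x_p-x_q,\,y_p-y_q)$; Cauchy--Schwarz together with $x_p^2+y_p^2\le\rho^2$ (which follows from $|p|\le\rho$ under either interpretation of $|\cdot|$) then gives $|2T|\le\rho\,h$. This is exactly the anisotropic scaling of the gauge, vertical weight two against horizontal weight one, that is responsible for the loss of a square root; it is precisely the phenomenon exhibited by the sharpness of Example~\ref{lip ex1} for \eqref{holder2}.

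Finally I would assemble the pieces. Writing $c+T=(c-T)+2T$ and using the inequality $(\alpha+\beta)^2\le 2\alpha^2+2\beta^2$ gives
\[
d_L(p,q)^4\le h^4+32(c-T)^2+128\,T^2\le 2\bigl(h^4+16(c-T)^2\bigr)+32\rho^2 h^2=2\,d_R(p,q)^4+32\rho^2 h^2.
\]
Since $h^2=(h^4)^{1/2}\le d_R(p,q)^2$ and, because $|p|,|q|\le\rho$, the gauge $d_R(p,q)=|p\cdot q^{-1}|_G$ is bounded on the ball by some $R_\rho$ so that $d_R(p,q)^4\le R_\rho^2\,d_R(p,q)^2$, both terms on the right are controlled by a constant multiple of $d_R(p,q)^2$; taking fourth roots yields \eqref{holder1}. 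Inequality \eqref{holder2} then follows by the same argument with the roles of $d_L$ and $d_R$ interchanged, equivalently by applying \eqref{holder1} to $p^{-1},q^{-1}$ and using $d_R(p,q)=d_L(p^{-1},q^{-1})$, since the estimate on $T^2$ is insensitive to the sign of $T$. The main obstacle is really just the single twist estimate of the second step: once the two gauges are written in the displayed form, everything else is bookkeeping with elementary inequalities and the boundedness of the gauge on the $\rho$-ball.
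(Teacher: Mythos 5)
Your proof is correct and follows essentially the same route as the paper: both hinge on rewriting the twist $x_py_q-x_qy_p$ as an inner product of the horizontal displacement with a vector bounded by $\rho$, so that Cauchy--Schwarz gives $|2T|\le\rho\,h$, after which everything is elementary bookkeeping with the boundedness of the gauge on the $\rho$-ball. Your version merely organizes the bookkeeping at the level of fourth powers (and thereby avoids the paper's tacit reduction to the case $\delta\le 1$), but the decomposition and the key estimate are identical.
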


\begin{proof}
We give a proof for the sake of completeness. We only show \eqref{holder1}. The proof of \eqref{holder2} is similar. Set $p=(x_p, y_p, z_p)$ and $q=(x_q, y_q, z_q)$.
It is then clear that we only need to show that there exists some $C>0$ depending only on $\rho$ such that
\[
\begin{aligned}
\bigg|z_p-z_q+ {1\over 2} & x_py_q  -{1\over 2}x_qy_p \bigg| \\
&\leq C\left((|x_p-x_q|^2+|y_p-y_q|^2)^2+16\left(z_p-z_q-{1\over 2}x_py_q+{1\over 2}x_qy_p\right)^2\right)^{1\over 4}
\end{aligned}
\]
for all $p, q\in \mathbb{H}$ with $|p|, |q|\leq \rho$. 

Let $\delta=((|x_p-x_q|^2+|y_p-y_q|^2)^2+16|z_p-z_q-{1\over 2}x_py_q+{1\over 2}x_qy_p|^2)^{1\over 4}\leq 1$. Then it is clear that 
\[
\begin{aligned}
\left|z_p-z_q+{1\over 2}x_py_q-{1\over 2}x_qy_p\right|& \leq {\delta^2\over 4}+\left|x_py_q-x_qy_p\right|\\
&={\delta^2\over 4}+\left|(x_p-x_q)y_q-x_q(y_p-y_q)\right|.
\end{aligned}
\]
It follows that 
\[
\left|z_p-z_q+{1\over 2}x_py_q-{1\over 2}x_qy_p\right|\leq {\delta^2\over 4}+(|x_p-x_q|^2+|y_p-y_q|^2)^{1\over 2}(x_q^2+y_q^2)^{1\over 2}.
\]
Noticing that $x_q^2+y_q^2\leq \rho^2$, we have 
\[
\left|z_p-z_q+{1\over 2}x_py_q-{1\over 2}x_qy_p\right|\leq {\delta^2\over 4}+\rho(|x_p-x_q|^2+|y_p-y_q|^2)^{1\over 2}\leq {\delta^2\over 4}+{\rho}\delta=\left({\delta\over 4} +{\rho}\right)\delta.
\]
We conclude the proof by choosing $C=1/4+\rho$. 
\end{proof}

\subsection{Lipschitz continuity}

We discuss two types of Lipschitz continuity with respect to $d_L$ and $d_R$. 

\begin{comment}
\begin{defi}\label{def lip}
A function $u: \mathbb{H}\to \mathbb{R}$ is said to be Lipschitz continuous with respect to $d_L$ if there exists $L>0$ such that
\begin{equation}\label{left lip}
|u(p)-u(q)|\leq Ld_L(p, q)
\end{equation}
for  any $p, q\in \mathbb{H}$. 
\end{defi}

\begin{defi}\label{def right lip}
A function $u: \mathbb{H}\to \mathbb{R}$ is said to be Lipschitz continuous with respect to $d_R$ if there exists $L>0$ such that
\begin{equation}\label{right lip}
|u(p)-u(q)|\leq Ld_R(p, q)
\end{equation}
for  any $p, q\in \mathbb{H}$. 
\end{defi}
We may accordingly define local Lipschitz continuity for both metrics. 
\end{comment}

It is easily seen that the function $f_0:\mathbb{H}\to \mathbb{R}$ given by $f_0(p)=|p|_G$ is a Lipschitz function with respect to $d_L$ and $d_R$, due to the triangle inequality.
%\[
%\begin{aligned}
%&|p|_G-|q|_G=d_L(p, 0)-d_L(q, 0)\leq d_L(p, q);\\
%&|p|_G-|q|_G=d_R(p, 0)-d_R(q, 0)\leq d_R(p, q).
%\end{aligned}
%\]
But there exist functions that are Lipschitz with respect to one of the metrics but not with respect to the other. An example, following Example \ref{lip ex1}, is as below.
\begin{example}\label{lip ex2}
Fix $q=(1, 1, 0)\in \mathbb{H}$ as in Example \ref{lip ex1}. Let $f_q: \mathbb{H}\to \mathbb{R}$ defined by $f_q(p)=d_R(p, q)$ for every $p\in \mathbb{H}$, which satisfies 
\[
|f_q(p)-f_q(p')|=|d_R(p, q)-d_R(p', q)|\leq d_R(p, p')
\]
for all $p, p' \in \mathbb{H}$. But there is no constant $L>0$ such that 
\[
f_q(p)-f_q(p')\leq Ld_L(p, p')
\]
for all $p, p'\in \mathbb{H}$, for otherwise we may take $p=(1-\vep, 1+\vep, \vep)$ and $p'=q$, and get
\[
d_R(p, p')\leq Ld_L(p, p'),
\]
which is not true when $\vep>0$ small, as explained in Example \ref{lip ex1}. However, by Proposition \ref{prop holder}, the function $f_q$ is still locally {1/2}-H\"{o}lder continuous with respect to $d_L$. 
\end{example}

On the other hand, not all functions that are (locally) Lipschitz with respect to $d_L$ or $d_R$ are (locally) Lipschitz with respect to the Euclidean metric. The simplest example is the function $f(p)=|p|_G$ for $p\in \mathbb{H}$.

We conclude this section by showing the equivalence of Lipschitz continuity with respect to both metrics for functions with symmetry. We include in our discussions two different types of evenness.

\begin{defi}[Even functions]\label{def even}
We say a function $f:\mathbb{H}\to \mathbb{R}$ is even (or symmetric about the origin) if   
$f(p)=f(p^{-1})$ for all $p\in \mathbb{H}$. We say $f$ is vertically even (or symmetric about the horizontal coordinate plane) if $f(p)=f(\overline{p})$ for all $p\in \mathbb{H}$, where 
\begin{equation}\label{reflection}
\overline{p}=(x, y, -z) \ \text{ for any $p=(x, y, z)\in \mathbb{H}$}.
\end{equation}
\end{defi}
 
Since $|p\cdot q^{-1}|_G=|\overline{p}^{-1}\cdot \overline{q}|_G=|(p^{-1})^{-1}\cdot q^{-1}|_G$ for any $p, q\in \mathbb{H}$, the following result is obvious.
\begin{prop}[Equivalence of Lipschitz continuities]\label{prop evenness}
Let $f: \mathbb{H}\to \mathbb{R}$ be a function that is either even or vertically even in $\mathbb{H}$.
Then $f$ is Lipschitz continuous with respect to $d_L$ if and only if $f$ is Lipschitz continuous with respect to $d_R$. 
\end{prop}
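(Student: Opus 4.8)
The plan is to exploit the two gauge identities recorded just before the statement, namely
$$
|p\cdot q^{-1}|_G=|(p^{-1})^{-1}\cdot q^{-1}|_G=|\overline{p}^{-1}\cdot \overline{q}|_G,
$$
which translate at once into the metric identities $d_R(p,q)=d_L(p^{-1},q^{-1})$ and $d_R(p,q)=d_L(\overline{p},\overline{q})$. In words, the right-invariant distance between $p$ and $q$ equals the left-invariant distance between their inverses, and also between their vertical reflections. Once these are in hand, the Lipschitz equivalence is a one-line substitution that carries the symmetry of $f$ into a comparison of the two metrics.

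First I would verify the two displayed identities by a direct computation of $p\cdot q^{-1}$, of $(p^{-1})^{-1}\cdot q^{-1}$, and of $\overline{p}^{-1}\cdot\overline{q}$ from the group law, checking that all three have horizontal parts agreeing up to sign and the same vertical part, so that the three points share the same Kor\'anyi gauge. This is the only genuine (if routine) computation in the argument, and it is precisely where the special quartic form of $|\cdot|_G$—which sees each coordinate only through a square—does the work; I expect no real obstacle here beyond bookkeeping.

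Second, assume $f$ is even and Lipschitz with respect to $d_L$ with constant $L$. Then for arbitrary $p,q\in\mathbb{H}$,
$$
|f(p)-f(q)|=|f(p^{-1})-f(q^{-1})|\leq L\,d_L(p^{-1},q^{-1})=L\,d_R(p,q),
$$
so $f$ is Lipschitz with respect to $d_R$ with the same constant. The reverse implication is identical with the roles of $d_L$ and $d_R$ interchanged, using $d_L(p,q)=d_R(p^{-1},q^{-1})$ (equivalently, replacing $p,q$ by $p^{-1},q^{-1}$ in the identity above). For the vertically even case I would run the same computation with $\overline{p},\overline{q}$ in place of $p^{-1},q^{-1}$, invoking $f(p)=f(\overline{p})$ together with $d_R(p,q)=d_L(\overline{p},\overline{q})$. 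Since both symmetry types reduce to substituting a change of variables ($p\mapsto p^{-1}$ or $p\mapsto\overline{p}$) that acts as an isometry between $(\mathbb{H},d_L)$ and $(\mathbb{H},d_R)$, the equivalence of the two Lipschitz conditions follows with no change in the Lipschitz constant.
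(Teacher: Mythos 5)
Your proposal is correct and follows essentially the same route as the paper, which simply records the gauge identities $|p\cdot q^{-1}|_G=|\overline{p}^{-1}\cdot \overline{q}|_G=|(p^{-1})^{-1}\cdot q^{-1}|_G$ (equivalently $d_R(p,q)=d_L(p^{-1},q^{-1})=d_L(\overline{p},\overline{q})$) and declares the equivalence obvious. You merely spell out the substitution that the paper leaves to the reader, and your observation that the Lipschitz constant is preserved is consistent with the intended argument.
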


\subsection{Horizontal convexity}
\begin{defi}[{\cite[Definition 4.1]{LMS}}]\label{defi h-convex}
Let $\Omega$ be an open set in $\mathbb{H}$ and $u: \Omega\to \mathbb{R}$ be an upper semicontinuous function. The function $u$ is said to be horizontally convex or h-convex in $\Omega$, if for every $p\in \mathbb{H}$ and $h\in \mathbb{H}_0$ such that $[p\cdot h^{-1}, p\cdot h]\subset \Omega$, we have 
\begin{equation}\label{h-convex eqn}
u(p\cdot h^{-1})+u(p\cdot h)\geq 2u(p).
\end{equation}
\end{defi}
%An equivalent notion of convexity is called CC-convex; see \cite[Definition 4.2]{LMS}. 

One may also define convexity of a function through its second derivatives in the viscosity sense. 
\begin{defi}\label{defi v-convex}
Let $\Omega$ be an open set in $\mathbb{H}$ and $u:\Omega\to \mathbb{R}$ be an upper semicontinuous function. The function $u$ is said to be v-convex in $\Omega$ if
\begin{equation}\label{v-convex eqn}
(\nabla^2_H u)^\ast(p)\geq 0  \quad \text{for all $p\in \mathbb{H}$}
\end{equation}
in the viscosity sense. 
\end{defi}
It is clear that $u\in C^2(\Omega)$ is v-convex if it satisfies \eqref{v-convex eqn} everywhere in $\Omega$. It is known that the h-convexity and v-convexity are equivalent \cite{LMS, Wa}.  The following example shows that h-convexity is very different from convexity in the Euclidean sense.

\begin{example}\label{ex h-convex}
Let 
\begin{equation}\label{h-convex fun}
f(x, y, z)=x^2y^2+2z^2
\end{equation}
 for all $(x, y, z)\in \mathbb{H}$. It is not difficult to verify that $f$ is h-convex. Indeed, for any $p=(x, y, z)\in \mathbb{H}$ and $h=(h_1, h_2, 0)\in \mathbb{H}_0$, we have
\[
\begin{aligned}
& f(p\cdot h)+f(p\cdot h^{-1})\\
=&\ 2x^2y^2+4z^2+3x^2h_2^2+3y^2h_1^2+2h_1^2h_2^2+6xyh_1h_2\\
\geq &\ 2f(p)+3(xh_2+yh_1)^2+2h_1^2h_2^2\geq 2f(p).
 \end{aligned}
\]
The function $f$ is an example of (globally) h-convex functions in $\mathbb{H}$ that is not convex in $\mathbb{R}^3$.
\end{example}

\section{Uniqueness of unbounded solutions}\label{sec unique}

In this section, motivated by a Euclidean argument in \cite{BBBL}, we present a proof of Theorem \ref{comparison thm} on a comparison principle for \eqref{special eqn} with exponential growth at space infinity. Our result and proof are different from those of \cite{HE}.

\begin{proof}[Proof of Theorem \ref{comparison thm}]
%Let $\mu>0$ be the constant as in \eqref{pen est}. 
We aim to show that $u\leq v$ in $\mathbb{H}\times [0, T)$ for any fixed $T>0$. By the growth assumption, there exist $k>0$ and $C_T>0$ satisfying \eqref{exp growth}. Take an arbitrary constant $\beta>\min\{k, 1\}$ and then $\alpha>0$ to be determined later. 
%depending on $\beta$, $\|A\|$ and $L_f$. %It will actually appear later that $\alpha$ can be any constant satisfying 
%\[
%\alpha>2(\beta^2 \mu^2 +\beta \mu)\|A\|+\beta \mu  L_f.
%\]
Set
\begin{equation}\label{penalty}
g(p, t)=e^{\alpha t+\beta \la p\ra}
\end{equation}
for $(p, t)\in \mathbb{H}\times [0, \infty)$.  {Recall that $\la p \ra$ is a function of $p\in \mathbb{H}$ given in \eqref{bracket}.}
If $p=(x, y, z)$, we have by direct calculations 
\begin{equation}\label{bracket der}
\nabla_H \la p\ra=\left(\frac{ x^3-4y z}{(1+x^4+y^4+16z^2)^{3\over 4}}, \frac{ y^3+4x z}{(1+x^4+y^4+16z^2)^{3\over 4}}\right),
\end{equation}
which implies that there exists $\mu>0$ such that 
\begin{equation}\label{penalty est}
|\nabla_H g(p, t)|\leq \beta \mu g(p, t) 
\end{equation}
for all $(p, t)\in \mathbb{H}\times [0, \infty)$.

We assume by contradiction that $u(p, t)-v(p, t)$ takes a positive value at some $(p, t)\in \mathbb{H}\times (0, \infty)$. Then there exists $\sigma\in (0, 1)$ such that 
\[
u(p, t)-v(p, t)-2 \sigma g(p, t)-{\sigma \over T-t}
\]
attains a positive maximum at $(\hat{p}, \hat{t})\in \mathbb{H}\times [0, T)$.  For all $\vep>0$ small, consider the function 
\[
\Phi(p, q, t, s)= u(p, t)-v(q, s)-\sigma \Psi_\vep(p, q, t, s)-\frac{(t-s)^2}{\vep}-{\sigma\over T-t}
\]
with
\[
\Psi_\vep(p, q, t, s)=\varphi_\vep(p, q)+ K(p, q, t, s),
\]
\smallskip
\[
\varphi_\vep(p, q)={1\over \vep}d_R(p, q)^4={|p\cdot q^{-1}|^4\over \vep}, \quad  K(p, q, t, s)=g(p, t)+g(q, s).
\]
Then $\Phi$ attains a positive maximum at some $(p_\vep, q_\vep, t_\vep, s_\vep)\in \mathbb{H}^2\times [0, T)^2$. In particular, 
\[
\Phi(p_\vep, q_\vep, t_\vep, s_\vep)\geq \Phi(\hat{p}, \hat{p}, \hat{t}, \hat{t}),
\]
which implies that 
\begin{equation}\label{comparison1}
\begin{aligned}
{|p_\vep\cdot q_\vep^{-1}|^4\over \vep}+{(t_\vep-s_\vep)^2\over \vep} \leq   & u(p_\vep, t_\vep)-v(q_\vep, s_\vep)-\sigma g(p_\vep, t_\vep)- \sigma g(q_\vep, s_\vep)-{\sigma\over T-t_\vep}\\ 
&-\left(u(\hat{p}, \hat{t})-v(\hat{p}, \hat{t})-2 \sigma g(\hat{p}, \hat{t})-{\sigma\over T-\hat{t}}\right).
\end{aligned}
\end{equation}
Since, due to \eqref{exp growth}, the terms $u(p_\vep, t_\vep)-v(q_\vep, t_\vep)-\sigma g(p_\vep, t_\vep)-\sigma g(q_\vep, s_\vep)$ are bounded from above uniformly in $\vep$, we have 
\begin{equation}\label{lim0}
d_R(p_\vep, q_\vep)\to 0 \text{ and } t_\vep-s_\vep\to 0 \quad \text{as $\vep\to 0$}.
\end{equation}
We notice that $p_\vep, q_\vep$ are bounded, since otherwise the right hand side of \eqref{comparison1} will tend to $-\infty$. Therefore, by taking a subsequence, still indexed by $\vep$, we have $p_\vep, q_\vep\to \overline{p}\in \mathbb{H}$ and $t_\vep, s_\vep\to \overline{t}\in [0, T)$. It follows that 
\[
\begin{aligned}
& \limsup_{\vep\to 0} u(p_\vep, t_\vep)-v(q_\vep, s_\vep)-\sigma g(p_\vep, t_\vep)-\sigma g(q_\vep, s_\vep)-{\sigma\over T-t_\vep}\\
&\leq u(\overline{p}, \overline{t})-v(\overline{p}, \overline{t})-2\sigma g(\overline{p},\overline{t})-{\sigma \over T-\overline{t}},
\end{aligned}
\]
which yields
\[
\varphi_\vep(p_\vep, q_\vep)\to 0 \quad \text{ as $\vep\to 0$}.
\]
Also, it is easily seen that $\overline{t}> 0$ and therefore $t_\vep, s_\vep>0$ thanks to the condition that $u(\cdot, 0)\leq v(\cdot, 0)$ in $\mathbb{H}$.

{In order to apply the Crandall-Ishii lemma (cf. \cite{CIL}) in our current case, let us recall the definition of semijets adapted to the Heisenberg group: for any $(p, t)\in \mathbb{H}\times (0, \infty)$ and any locally bounded upper semicontinuous function $u$ in $\mathbb{H}\times (0, \infty)$, 
\[
\begin{aligned}
P_H^{2, +}u(p, t)=\bigg\{(\tau, \zeta, X)\in \mathbb{R}\times \mathbb{R}^3\times S^{2\times 2}: u(q, s)\leq& u(p, t) +\tau (s-t)\\
+\la \zeta, p^{-1}\cdot q\ra + {1\over 2} & \la X h, h\ra+o(|p^{-1}\cdot q|_G^2)\bigg\},
\end{aligned}
\]
where $h$ denotes the horizontal projection of $p^{-1}\cdot q$. Similarly, we may define 
\[
\begin{aligned}
P_H^{2, -}u(p, t)=\bigg\{(\tau, \zeta, X)\in \mathbb{R}\times \mathbb{R}^3\times S^{2\times 2}: u(q, s)\geq& \ u(p, t) +\tau (s-t)\\
+\la \zeta, p^{-1}\cdot q\ra +{1\over 2} & \la X h, h\ra+o(|p^{-1}\cdot q|_G^2)\bigg\}
\end{aligned}
\]
for any locally bounded lower semicontinuous function $u$. Also, the closure $\overline{P}_H^{2, +}$ is the set of  triples $(\tau, \zeta, X)\in \mathbb{R}\times \mathbb{R}^3\times S^{2\times 2}$ that satisfy the following: there exist $(p_j, t_j)\in \mathbb{H}\times [0, \infty)$ and $(\tau_j, \zeta_j, X_j)\in P_H^{2, +}(p_j, t_j)$ such that 
\[
\left(p_j, t_j, u(p_j, t_j), \tau_j, \zeta_j, X_j\right)\to \left(p, t, u(p, t), \tau, \zeta, X\right) \quad\text{ as $j\to \infty$}.
\]  
The closure set $\overline{P}_H^{2, -}$ of $P_H^{2, -}$ can be similarly defined. We refer to \cite{Bi2} for more details. }

We now apply the adaptation of the Crandall-Ishii lemma to the Heisenberg group \cite{Mnotes, Bi2} and get for any $\lambda\in(0, 1)$
\[
(a_1, \zeta_1, X)\in \overline{P}_H^{2, +} u(p_\vep, t_\vep) \text{  and } (a_2, \zeta_2, Y)\in \overline{P}_H^{2, -} v(q_\vep, s_\vep)
\]
such that
\begin{equation}\label{time der}
 a_1-a_2=\alpha \sigma K(p_\vep, q_\vep, t_\vep, s_\vep)+{\sigma\over (T-t_\vep)^2},
 \end{equation}
 \smallskip
 \begin{equation} \label{ishii}
\langle X w, w\rangle -\langle Y w, w\rangle\leq \la (\sigma M+\lambda \sigma^2 M^2) w_{p_\vep}\oplus w_{q_\vep}, w_{p_\vep}\oplus w_{q_\vep}\ra,
\end{equation}
and the horizontal projections of $\zeta_1, \zeta_2\in \mathbb{R}^3$ can be written respectively as 
$\xi+\eta_1$ and $\xi+\eta_2$ (in $\mathbb{R}^2$) with
 \[
 \xi=\nabla^p_H \varphi_\vep(p_\vep, q_\vep)=-\nabla^q_H \varphi_\vep(p_\vep, q_\vep),
 \]
 \[
\eta_1=\beta \sigma \nabla_H g(p_\vep, t_\vep), \quad \eta_2=-\beta \sigma \nabla_H g(q_\vep, s_\vep).
\]

\smallskip
\noindent Here $w=(w_1, w_2)\in \mathbb{R}^2$ is arbitrary, $M=(\nabla^2  \Psi_\vep)^\ast(p_\vep, q_\vep, t_\vep, s_\vep)$ is a $6\times 6$ symmetric matrix, and\begin{equation}\label{wpe}
w_{p_\vep}=\left(w_1, w_2, {1\over 2}w_2 x_{p_\vep}-{1\over 2}w_1 y_{p_\vep}\right)
\end{equation}
and
\begin{equation}\label{wqe}
w_{q_\vep}=\left(w_1, w_2, {1\over 2}w_2 x_{p_\vep}-{1\over 2}w_1 y_{q_\vep}\right)
\end{equation}
with $p_\vep=(x_{p_\vep}, y_{p_\vep}, z_{p_\vep})$ and $q_\vep=(x_{q_\vep}, y_{q_\vep}, z_{q_\vep})$.

%%By Young's inequality, it is easily seen that 
%%\begin{equation}\label{est1}
%%|\eta|\leq C_1\beta e^{\alpha t_\vep+\beta\la p_\vep\ra}, \quad |\tilde{\eta}|\leq C_1\beta e^{\alpha t_\vep+\beta\la q_\vep\ra}
%%\end{equation}
%%for some constant $C_1>0$.
It is easily seen that $M=M_1+M_2$, where 
\[
M_1=\nabla^2 \varphi_\vep(p_\vep, q_\vep)
\]
and
\[
M_2=\nabla^2 K (p_\vep, q_\vep)=
\begin{pmatrix}{}
\nabla^2 g (p_\vep, t_\vep) & 0 \\ 0 & \nabla^2 g(q_\vep, s_\vep)
\end{pmatrix}.
\]
It follows from the calculation in the comparison arguments in \cite{Bi2} (and also \cite{Bi1, Mnotes}) that there exists $C>0$ such that 
\begin{equation}\label{bieske}
\la (M_1+\lambda M_1^2) w_{p_\vep}\oplus w_{q_\vep}, w_{p_\vep}\oplus w_{q_\vep}\ra\leq {C\over \vep}|w|^2(z_{p_\vep}-z_{q_\vep}-{1\over 2}x_{p_\vep} y_{q_\vep}+{1\over 2}x_{q_\vep} y_{p_\vep})^2
\end{equation}
for any $\lambda>0$ small.  We next follow the strategy in the Euclidean case from \cite{BBBL}. However{,} the algebraic complexity is quite more challenging in the non-commutative case. With the help of a computer algebra system\footnote{Program is available  in the arXiv.org  version of the paper.}, we simplify the left hand side of the following inequalities and obtain  a constant $C_\beta>0$ depending only on $\beta$, such that 
\begin{equation}\label{heisenberg second0}
\la M_2 (w_{p_\vep}\oplus w_{q_\vep}), (w_{p_\vep}\oplus w_{q_\vep})\ra \\
\leq {1 \over \vep} |w|^2C_\beta K(p_\vep, q_\vep, t_\vep, s_\vep) ,
\end{equation}

\begin{equation}\label{heisenberg second1}
\begin{aligned}&\la M_1M_2 (w_{p_\vep}\oplus w_{q_\vep}), (w_{p_\vep}\oplus w_{q_\vep})\ra \\
&\leq {1 \over \vep} |w|^2C_\beta K(p_\vep, q_\vep, t_\vep, s_\vep) \left|z_{p_\vep}-z_{q_\vep}-{1\over 2}x_{p_\vep} y_{q_\vep}+{1\over 2}x_{q_\vep} y_{p_\vep}\right|
\end{aligned}
\end{equation}
and
\begin{equation}\label{heisenberg second2}
\la M_2^2 (w_{p_\vep}\oplus w_{q_\vep}), (w_{p_\vep}\oplus w_{q_\vep}) \ra\leq |w|^2C_\beta K^2(p_\vep, q_\vep, t_\vep, s_\vep).
\end{equation}

We remark that the existence of $C_\beta$ here is essentially due to the boundedness of $\nabla_H \la p \ra$ and $\nabla^2_H \la p \ra$ in $\mathbb{H}$.

By \eqref{ishii} and \eqref{bieske}, we may take $\lambda>0$ sufficiently small, depending on the size of $\vep$, $\overline{p}, \overline{t}$, and $\beta$, such that  
\begin{equation}\label{est2}
\begin{aligned}
&\la Xw, w\ra-\la Yw, w\ra \\
&\leq {C\sigma \over \vep}|w|^2(z_{p_\vep}-z_{q_\vep}-{1\over 2}x_{p_\vep} y_{q_\vep}+{1\over 2}x_{q_\vep} y_{p_\vep})^2+2\sigma |w|^2 C_\beta K(p_\vep, q_\vep, t_\vep, s_\vep).
\end{aligned}
\end{equation}

We next apply the definition of viscosity sub- and supersolutions and get
\begin{equation}\label{vis1}
a_1-\tr (AX)+f(p_\vep, \xi+\eta_1)\leq 0
\end{equation}
and
\begin{equation}\label{vis2}
a_2-\tr (AY)+f(q_\vep, \xi+\eta_2)\geq 0.
\end{equation}
By subtracting \eqref{vis2} from \eqref{vis1}, we have
\[
a_1-a_2\leq \tr (A X)-\tr (A Y)+f(q_\vep, \xi+\eta_2)-f(p_\vep, \xi+\eta_1),
\]
which yields, by \eqref{est2} and (A1), 
\begin{equation}\label{est3}
\begin{aligned}
a_1-a_2\leq {C\sigma \over \vep}\|A\|&(z_{p_\vep}-z_{q_\vep}-{1\over 2}x_{p_\vep} y_{q_\vep}+{1\over 2}x_{q_\vep} y_{p_\vep})^2+L_2(\rho)|p_\vep\cdot q_\vep^{-1}|\\
&+(2\sigma C_\beta\|A\|+2\beta \mu \sigma L_1)K(p_\vep, q_\vep, t_\vep, s_\vep)
\end{aligned}
\end{equation}
with $\rho=|\overline{p}|+1$ for $\vep>0$ sufficiently small.

Since we have \eqref{lim0}, we now can take $\vep>0$ small to get
\[
{C\over \vep}(z_{p_\vep}-z_{q_\vep}-{1\over 2}x_{p_\vep} y_{q_\vep}+{1\over 2}x_{q_\vep} y_{p_\vep})^2+ L_2(\rho)|p_\vep\cdot q_\vep^{-1}|\leq {\sigma\over T^2}.
\]
Taking $\lambda>0$ accordingly small and  
\[
\alpha>2C_\beta\|A\|+\beta \mu  L_f,
\] 
we reach a contradiction to \eqref{time der}.

%%%%%%%%%%%%%%%%%%%%%%%%%%%%
\begin{comment}
{Notes: without symmetrization, the original entries are }
\[
\begin{aligned}
M_\beta^{12}=\beta  & e^{\alpha t_\vep +\beta\la p_\vep\ra} \Bigg(\frac{\beta(x_{r_\vep}-4y_\vep z_\vep)(y_{r_\vep}+4x_\vep z_\vep)}{(1+x_\vep^4+y_\vep^4+16z_{q_\vep})^{3\over 2}}\\
&+\frac{4z_\vep-8z(x_\vep^4-2y_\vep^4)+64z_{r_\vep}-2x_\vep y_\vep-2x_\vep^5y_\vep-2x_\vep y_\vep^5-3x_{r_\vep}y_{r_\vep}+16x_\vep y_\vep z_{q_\vep}}{(1+x_\vep^4+y_\vep^4+16z_{q_\vep})^{7\over 4}}\Bigg);\\
M_\beta^{21}=\beta  & e^{\alpha t_\vep +\beta\la p_\vep\ra} \Bigg(\frac{\beta(x_{r_\vep}-4y_\vep z_\vep)(y_{r_\vep}+4x_\vep z_\vep)}{(1+x_\vep^4+y_\vep^4+16z_{q_\vep})^{3\over 2}}\\
&+\frac{-4z_\vep-8z(2x_\vep^4-y_\vep^4)-64z_{r_\vep}-2x_\vep y_\vep-2x_\vep^5y_\vep-2x_\vep y_\vep^5-3x_{r_\vep}y_{r_\vep}+16x_\vep y_\vep z_{q_\vep}}{(1+x_\vep^4+y_\vep^4+16z_{q_\vep})^{7\over 4}}\Bigg);\\
\end{aligned}
\]
\end{comment}
\end{proof}

An immediate consequence is certainly the uniqueness of solutions with at most exponential growth at space infinity. 

\begin{cor}[Uniqueness of solutions]\label{uniqueness thm}
Assume that (A1) and (A2) hold. Let $u_0\in C(\mathbb{H})$. Then there is at most one continuous viscosity solution $u$ of \eqref{special eqn}--\eqref{special initial} satisfying the exponential growth condition (G).
\end{cor}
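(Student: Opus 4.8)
The plan is to derive this uniqueness statement directly from the comparison principle in Theorem \ref{comparison thm} by applying it twice, with the roles of the two candidate solutions exchanged. The corollary is therefore essentially a formal consequence, and all the real work has already been done in the proof of Theorem \ref{comparison thm}.

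First I would let $u_1$ and $u_2$ be two continuous viscosity solutions of \eqref{special eqn}--\eqref{special initial}, each satisfying the growth condition (G). Since a continuous viscosity solution is simultaneously an upper semicontinuous subsolution and a lower semicontinuous supersolution, I may regard $u_1$ as the subsolution $u$ and $u_2$ as the supersolution $v$ appearing in Theorem \ref{comparison thm}.

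Next I would verify the hypothesis \eqref{exp growth} for the difference. Fix $T>0$. By (G) applied to each $u_i$, there are $k_i>0$ and $C_{T,i}>0$ with $|u_i(p,t)|\leq C_{T,i}e^{k_i\la p\ra}$ on $\mathbb{H}\times[0,T]$. Taking $k=\max\{k_1,k_2\}$ and $C_T=C_{T,1}+C_{T,2}$ gives
\[
u_1(p,t)-u_2(p,t)\leq |u_1(p,t)|+|u_2(p,t)|\leq C_T e^{k\la p\ra},
\]
which is precisely \eqref{exp growth}. Moreover $u_1(\cdot,0)=u_0=u_2(\cdot,0)$, so in particular $u_1(\cdot,0)\leq u_2(\cdot,0)$. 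Theorem \ref{comparison thm} then yields $u_1\leq u_2$ on $\mathbb{H}\times[0,\infty)$.

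Finally, since the roles of $u_1$ and $u_2$ are symmetric, repeating the same argument with $u=u_2$ and $v=u_1$ gives $u_2\leq u_1$, and combining the two inequalities gives $u_1=u_2$. There is no genuine obstacle beyond Theorem \ref{comparison thm} itself; the only point to check is that the two-sided bound (G) on each individual solution transfers to the one-sided exponential growth \eqref{exp growth} of their difference, which is immediate upon passing to the larger growth rate and the sum of the constants.
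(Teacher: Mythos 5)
Your argument is correct and is exactly the route the paper intends: the authors state the corollary as an immediate consequence of Theorem \ref{comparison thm}, obtained by applying the comparison principle twice with the roles of the two solutions exchanged. Your additional check that the two-sided bound (G) on each solution yields the one-sided bound \eqref{exp growth} on their difference is the only (minor) verification needed, and you handle it correctly.
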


The existence of viscosity solutions of \eqref{special eqn}--\eqref{special initial} is not the main topic of this work, but we remark that it is possible to adapt Perron's method \cite{CIL} to our current case in the Heisenberg group, under various extra assumptions on the function $f$. For example, one may further assume on \eqref{special eqn} that
\begin{enumerate}
\item[(A3)] $|f(p, \xi)|\leq C_f(1+|\xi|)$
for some $C_f>0$ and all $p\in \mathbb{H}, \xi\in \mathbb{R}^2$.
\end{enumerate}
In this case, it is not difficult to verify by computation that $\overline{u}=Cg(p, t)+C_f t$ and $\underline{u}=-Cg(p, t)-C_f t$ are respectively a supersolution and a subsolution of \eqref{special eqn} for any $C>0$ and $\beta>0$ when $\alpha>0$ is sufficiently large. Indeed, we have
\[
\overline{u}_t=C\alpha g+C_f,
\]
\[
|\tr(A(\nabla_H^2 \overline{u})^\ast)|\leq C\|A\| \beta^2\mu^2 g,
\]
and
\[
|f(p, \nabla_H \overline{u})|\leq C C_f\beta\mu g+C_f,
\]
where $\mu$ is the same constant as in the proof of Theorem \ref{comparison thm}. Therefore, by (A3), we get 
\[
\overline{u}_t-\tr(A(\nabla_H^2 \overline{u})^\ast)+f(p, \nabla_H \overline{u})\geq 0
\]
when $\alpha> \|A\|\beta^2\mu^2+C_f\beta\mu$. The verification for $\underline{u}$ is similar. 

If there exist $C>0$ and $k>0$ such that 
\begin{equation}\label{initial growth}
-Ce^{k \la p \ra} \leq u_0(p) \leq Ce^{k \la p \ra} \quad \text{ for all $p\in \mathbb{H}$},
\end{equation} 
then classical arguments \cite{CIL} show that the supremum over all subsolutions bounded by $\underline{u}$ and $\overline{u}$ is in fact a unique continuous solution. We state the result below without more details in its proof. 
  
\begin{cor}
Assume the Lipschitz conditions (A1), (A2) and the growth condition (A3). Let $u_0\in C(\mathbb{H})$ satisfy \eqref{initial growth} for some $C>0$ and $k>0$. Then there exists a unique continuous solution $u$ of \eqref{special eqn}--\eqref{special initial} satisfying the exponential growth condition (G).
\end{cor}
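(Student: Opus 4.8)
The plan is to get uniqueness directly from Corollary \ref{uniqueness thm} and to establish existence by adapting Perron's method \cite{CIL} to the Heisenberg setting, using the ordered pair of global barriers already identified in the discussion preceding the statement. Uniqueness is immediate: any two continuous solutions satisfying (G) coincide by Corollary \ref{uniqueness thm}. For existence, I would first fix $\beta>0$ and $C>0$ large enough that the growth bound \eqref{initial growth} yields $\underline{u}(\cdot,0)\le u_0\le \overline{u}(\cdot,0)$ on $\mathbb{H}$, where $\overline{u}=Cg+C_f t$ and $\underline{u}=-Cg-C_f t$ with $g$ as in \eqref{penalty}; by the computation already carried out, $\overline{u}$ is a supersolution and $\underline{u}$ a subsolution of \eqref{special eqn} once $\alpha>\|A\|\beta^2\mu^2+C_f\beta\mu$, with $\mu$ the constant from the proof of Theorem \ref{comparison thm}.

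Next I would set $u$ to be the supremum over the admissible family $\mathcal{A}$ of all subsolutions $w$ of \eqref{special eqn} satisfying $\underline{u}\le w\le\overline{u}$, that is,
\[
u(p,t)=\sup_{w\in\mathcal{A}} w(p,t),
\]
which is well defined since $\underline{u}\in\mathcal{A}$ and every member is dominated by $\overline{u}$. The core of the method is the two standard bump arguments: the upper semicontinuous envelope $u^\ast$ is a subsolution and the lower semicontinuous envelope $u_\ast$ is a supersolution. These transfer to $\mathbb{H}$ once phrased in terms of the $\mathbb{H}$-adapted semijets $P_H^{2,\pm}$ recalled in the proof of Theorem \ref{comparison thm}: if $u^\ast$ failed to be a subsolution at some point, one perturbs it by a small multiple of the gauge-type function $e^{\beta\la\cdot\ra}$ to build a strictly larger subsolution still lying below $\overline{u}$, contradicting maximality.

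Granting that $u^\ast$ is a subsolution and $u_\ast$ a supersolution, both satisfy (G) since each is sandwiched between $\underline{u}$ and $\overline{u}$, which grow like $e^{\beta\la p\ra}$. Provided the initial inequalities $u^\ast(\cdot,0)\le u_0\le u_\ast(\cdot,0)$ hold, Theorem \ref{comparison thm} gives $u^\ast\le u_\ast$ on $\mathbb{H}\times[0,\infty)$; combined with the trivial $u_\ast\le u\le u^\ast$, this forces $u_\ast=u=u^\ast$, so $u$ is continuous and is the desired solution satisfying (G).

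The main obstacle is attainment of the initial data, i.e.\ proving $u(\cdot,0)=u_0$ together with continuity up to $t=0$; this does not follow from the global barriers alone. It requires local barriers at each $(p_0,0)$ of the form $u_0(p_0)\pm\omega(d_L(\cdot,p_0))\pm Kt$, with $\omega$ a local modulus of continuity of $u_0$ and $K$ large, that are a supersolution (resp.\ subsolution) in a neighborhood of $(p_0,0)$ and pinch $u_0$ at $p_0$. Verifying the differential inequalities for these barriers is where the non-commutativity genuinely intervenes, since the horizontal derivatives of $d_L(\cdot,p_0)$ are not as simple as in $\mathbb{R}^n$; however, the boundedness of $\nabla_H\la p\ra$ and $(\nabla_H^2\la p\ra)^\ast$ exploited in the proof of Theorem \ref{comparison thm}, together with (A1)--(A3), lets one absorb the operator and Hamiltonian terms into the $Kt$ term, yielding the squeeze $u^\ast(p_0,0)\le u_0(p_0)\le u_\ast(p_0,0)$ and closing the argument.
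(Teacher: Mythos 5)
Your proposal follows essentially the same route as the paper: uniqueness from Corollary \ref{uniqueness thm}, and existence by Perron's method with the exponential barriers $\overline{u}=Cg+C_ft$ and $\underline{u}=-Cg-C_ft$, which is exactly the argument the paper sketches before stating the corollary (and then declines to detail further, deferring to \cite{CIL}). In fact you supply more than the paper does, correctly identifying the attainment of the initial data via local barriers as the step needing genuine work in the Heisenberg setting.
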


\section{Lipschitz preserving properties}\label{sec lip}

In this section, we strengthen the assumption (A2) on $f$; we assume 
\begin{enumerate}
\item[(A2')] the function $f(p, \xi)$ is globally Lipschitz continuous in $p$ with respect to the metric $d_R$, i.e., there exists $L_0>0$ such that
\begin{equation}\label{hamiltonian3}
|f(p, \xi)-f(q, \xi)|\leq L_0 |p\cdot q^{-1}|_G
\end{equation}
for all $p, q\in \mathbb{H}$ and $\xi\in \mathbb{R}^2$.
\end{enumerate}

\subsection{Right invariant Lipschitz continuity preserving}

We first discuss the Lipschitz continuity based on the standard gauge metric $d_L$ (or equivalently, the Carnot-Carath\'eodory metric). It turns out that even the simplest first order linear equation will not preserve such Lipschitz continuity.
\begin{example}\label{ex lip pre}
Fix $h_0=(1, 1)\in \mathbb{R}^2$.  Let us consider the equation 
\[
u_t-\langle h_0, \nabla_H u\rangle=0  \quad \text{in $\mathbb{H}$}
\]
with $u(p, 0)=u_0(p)=|p|_G$ for $p\in \mathbb{H}$. By direct verification and Corollary \ref{uniqueness thm}, the unique solution is 
\[
u(p, t)=|p\cdot ht|_G=d_R(p, h^{-1}t),
\]
where $h=(1, 1, 0)\in \mathbb{H}_0$.
However, it is not Lipschitz continuous with respect to $d_L$. Indeed, similar to Example \ref{lip ex2}, one may choose $p_1=(-t-\vep, -t+\vep, -\vep t)$ and $p_2=tv^{-1}=(-t, -t, 0)$, which gives
\[
u(p_1, t)-u(p_2, t)=|p_1\cdot p_2^{-1}|_G=(4\vep^4+64\vep^2 t^2)^{1\over 4}
\]
but
\[
d_L(p_1, p_2)=|p_1^{-1}\cdot p_2|_G=\sqrt{2}\vep.
\]
\end{example}

The example above directs us to first consider the Lipschitz continuity with respect to $d_R$. The following result is an immediate consequence of Theorem \ref{comparison thm}.
\begin{thm}[Preserving of right invariant Lipschitz continuity]\label{thm lip pre}
Assume that $f: \mathbb{H}\times \mathbb{R}^2\to \mathbb{R}$ satisfies the assumptions (A1), (A2') {and (A3)}. Let $u\in C(\mathbb{H}\times [0, \infty))$ be the unique solution of \eqref{special eqn}--\eqref{special initial}
satisfying the growth condition (G). If there exists $L>0$ such that
\begin{equation}\label{initial lip}
|u_0(p)-u_0(q)|\leq Ld_R(p, q)
\end{equation}
for all $p, q\in \mathbb{H}$, then
\begin{equation}\label{lip preserve}
|u(p, t)-u(q, t)|\leq (L+L_0t)d_R(p, q)
\end{equation}
for all $p, q\in \mathbb{H}$ and $t\geq 0$. In particular, there exists $C_\rho>0$ depending on $\rho>0$ and $t\geq 0$ such that
\begin{equation}\label{holder preserve}
|u(p, t)-u(q, t)|\leq C_\rho d_L(p, q)^{1\over 2}
\end{equation}
for all $p, q\in \mathbb{H}$ with $|p|, |q|\leq \rho$. Moreover, when $f$ does not depend on the space variable $p$,  \eqref{lip preserve} holds with $L_0=0$.
\end{thm}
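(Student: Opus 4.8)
Theorem~\ref{thm lip pre} asserts that right-invariant Lipschitz continuity of the initial datum $u_0$ is preserved (up to the additive term $L_0 t$) for the unique solution of \eqref{special eqn}--\eqref{special initial}, and that this yields local $1/2$-H\"older continuity with respect to $d_L$. My plan is to deduce the estimate \eqref{lip preserve} directly from the comparison principle, Theorem~\ref{comparison thm}, by comparing $u$ with a suitably translated and shifted copy of itself. The key structural fact I would exploit is that the metric $d_R$ is \emph{right} invariant, and that the horizontal operator $\tr(A(\nabla_H^2 u)^\ast)$ together with $\nabla_H u$ transforms in a controlled way under the relevant translations; this is precisely why the argument works for $d_R$ and fails for $d_L$, as Example~\ref{ex lip pre} shows.

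\textbf{Key steps.} First I would fix $a\in\mathbb{H}$ and define the right-translated function $v(p,t)=u(p\cdot a, t)$. One checks that $d_R(p\cdot a, q\cdot a)=|p\cdot a\cdot(q\cdot a)^{-1}|_G=|p\cdot q^{-1}|_G=d_R(p,q)$, so right translation is a $d_R$-isometry; this is the heart of why $d_R$ is the correct metric. Next I would compute how $v$ solves a perturbed version of \eqref{special eqn}: since $\nabla_H$ and $(\nabla_H^2)^\ast$ are built from left-invariant vector fields, $v$ will satisfy an equation of the same form but with $f(p,\xi)$ replaced by $f(p\cdot a, \tilde\xi)$ for an appropriately transformed gradient argument. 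Then, using assumption (A2') (global $d_R$-Lipschitz continuity of $f$ in $p$) to control $|f(p,\xi)-f(p\cdot a,\xi)|\leq L_0\,|a|_G$ and (A1) to absorb the gradient discrepancy, I would show that $w(p,t)=v(p,t)-L\,|a|_G-L_0 t\,|a|_G$ is a subsolution (or that $u(p,t)+L\,|a|_G+L_0 t\,|a|_G$ dominates it). Finally, since the initial condition gives $|u_0(p\cdot a)-u_0(p)|\leq L\,d_R(p\cdot a,p)=L|a|_G$, the comparison principle (after verifying the exponential growth hypothesis \eqref{exp growth}, which follows from (G)) yields $|u(p\cdot a,t)-u(p,t)|\leq (L+L_0 t)|a|_G$. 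Writing $a=q^{-1}\cdot p$ so that $q\cdot a=p$ and $|a|_G=|p\cdot q^{-1}|_G=d_R(p,q)$ gives \eqref{lip preserve}. The H\"older bound \eqref{holder preserve} is then immediate from \eqref{holder1} in Proposition~\ref{prop holder}, and the refinement $L_0=0$ when $f$ is independent of $p$ follows because the perturbation term then vanishes identically.

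\textbf{Main obstacle.} The delicate point is the second step: verifying precisely how the right translation $p\mapsto p\cdot a$ interacts with the left-invariant horizontal gradient and symmetrized Hessian, and confirming that the resulting equation for $v$ differs from \eqref{special eqn} only through the $p$-dependence of $f$ (which (A2') controls) and a gradient shift (which (A1) controls), with the second-order term $\tr(A(\nabla_H^2 v)^\ast)$ either unchanged or controllably perturbed. One must check carefully that no uncontrolled first- or second-order error arises from the non-commutativity of the group; it is conceivable that right translation does not preserve the operator exactly, in which case I would instead run the comparison argument at the level of the auxiliary doubling-of-variables functional used in the proof of Theorem~\ref{comparison thm}, inserting the penalization $\frac{1}{\vep}d_R(p,q)^4$ and tracking the extra contribution of $f(p_\vep,\cdot)-f(q_\vep,\cdot)$ bounded by $L_0\,|p_\vep\cdot q_\vep^{-1}|_G$ via (A2'). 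In either route, the growth condition (G) must be checked to guarantee applicability of \eqref{exp growth}, but this is routine since the translate of a function satisfying (G) again satisfies (G) with comparable constants.
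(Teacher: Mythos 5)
Your main route has the translation on the wrong side, and this is fatal rather than cosmetic. You set $v(p,t)=u(p\cdot a,t)$ and justify this by noting that right translation is a $d_R$-isometry. That isometry property is true but is not the relevant one. First, right translation does \emph{not} commute with the left-invariant fields: a direct computation gives $X_1(u\circ R_a)(p)=(X_1u)(p\cdot a)+a_2\,(X_3u)(p\cdot a)$ and similarly for $X_2$, so the equation satisfied by $v$ picks up error terms involving the vertical derivative $X_3u$, which neither (A1) nor (A2') nor the structure of \eqref{special eqn} controls. Second, your initial-data step is false: $d_R(p\cdot a,p)=|p\cdot a\cdot p^{-1}|_G$ is the gauge norm of a conjugate of $a$, not $|a|_G$; for $a=(a_1,a_2,0)$ one finds $|p\cdot a\cdot p^{-1}|_G^4=(a_1^2+a_2^2)^2+16(xa_2-a_1y)^2$, which is unbounded in $p$, so $|u_0(p\cdot a)-u_0(p)|\le L|a|_G$ does not follow from \eqref{initial lip}. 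Third, your closing identification is wrong: if $a=q^{-1}\cdot p$ then $|a|_G=|q^{-1}\cdot p|_G=d_L(p,q)$, not $d_R(p,q)$. In effect your scheme, were it to run, would prove preservation of $d_L$-Lipschitz continuity, which Example \ref{ex lip pre} shows is false.

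The paper's proof uses the \emph{left} translate: $v(p,t)=u(h^{-1}\cdot p,t)+(L+L_0t)|h|_G$. Left translation commutes exactly with $\nabla_H$ and $(\nabla_H^2)^\ast$ (no error terms at all), so $v$ solves \eqref{special eqn} with $f(p,\cdot)$ replaced by $f(h^{-1}\cdot p,\cdot)$, a perturbation of size $L_0|h^{-1}\cdot p\cdot p^{-1}|_G=L_0|h|_G$ by (A2'), absorbed by the term $L_0t|h|_G$. The point you should internalize is that $d_R(h^{-1}\cdot p,\,p)=|h^{-1}|_G=|h|_G$ is \emph{constant in $p$}: two points are at $d_R$-distance $\delta$ exactly when one is a left translate of the other by an element of gauge norm $\delta$ (take $h=p\cdot q^{-1}$, so $h^{-1}\cdot p=q$). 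That, not the isometry property of right translations, is why $d_R$ is the correct metric here. Your fallback suggestion (rerunning the doubling-of-variables argument with the $\frac{1}{\vep}d_R(p,q)^4$ penalization) could in principle be made to work, since that is exactly the penalization in the proof of Theorem \ref{comparison thm}, but it is not carried out and is considerably heavier than the two-line supersolution argument above.
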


\begin{proof}
By symmetry, we only need to prove that 
\begin{equation}\label{lip preserve1}
u(p, t)-u(h^{-1}\cdot p, t)\leq (L+L_0t)|h|_G
\end{equation}
for all $p, h\in \mathbb{H}$ and $t\geq 0$. It suffices to show that 
\[
v(p, t)=u(h^{-1}\cdot p, t)+(L+L_0t)|h|_G
\]
is a supersolution of \eqref{special eqn}--\eqref{special initial} for any $h\in \mathbb{H}$. To this end, we recall the left invariance of horizontal derivatives in the Heisenberg group, which implies that $v$ is a supersolution of 
\[
v_t-\tr (A \nabla^2_H v)+ f(h^{-1}\cdot p, \nabla_H v)=L_0|h|_G \quad \text{ in $\mathbb{H}\times (0, \infty)$}.
\]
Since 
\[
|f(h^{-1}\cdot p, \nabla_H v)-f(p, \nabla_H v)|\leq L_0 |h|_G
\]
due to \eqref{hamiltonian3}, we easily see that $v$ is a supersolution of \eqref{special eqn}. Also, by \eqref{initial lip}, we have $u(p, 0)\leq v(p, 0)$ for all $p\in \mathbb{H}$. We conclude the proof of \eqref{lip preserve1} by applying Theorem \ref{comparison thm}.  The H\"{o}lder continuity \eqref{holder preserve} follows from Proposition \ref{prop holder}.
\end{proof}

In view of Proposition \ref{prop evenness}, we may use the theorem above to show the preserving of Lipschitz continuity in the standard gauge metric under the assumption of evenness or vertical evenness. 
\begin{cor}[Lipschitz preserving of even solutions]\label{cor lip}
Assume that $f: \mathbb{H}\times \mathbb{R}^2\to \mathbb{R}$ satisfies the conditions (A1), (A2') {and (A3)}. Let $u\in C(\mathbb{H}\times [0, \infty))$ be the unique solution of \eqref{special eqn}--\eqref{special initial}
satisfying the growth condition (G). Assume also that $u(\cdot, t)$ is an even or vertically even function. If there exists $L>0$ such that
\begin{equation}\label{initial lip2}
|u_0(p)-u_0(q)|\leq Ld_L(p, q)
\end{equation}
for all $p, q\in \mathbb{H}$, then
\begin{equation}\label{lip preserve2}
|u(p, t)-u(q, t)|\leq (L+L_0t)d_L(p, q)
\end{equation}
for all $p, q\in \mathbb{H}$ and $t\geq 0$. In particular, when $f$ does not depend on the space variable $p$, then \eqref{lip preserve2} holds with $L_0=0$.
\end{cor}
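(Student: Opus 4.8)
The plan is to reduce this statement to Theorem \ref{thm lip pre} by invoking the equivalence of the two Lipschitz notions from Proposition \ref{prop evenness}, applying it once to the initial datum and once to the solution at each fixed time. The crucial point is that the equivalence in Proposition \ref{prop evenness} is not merely qualitative: the gauge identity $|p\cdot q^{-1}|_G=|\overline{p}^{-1}\cdot \overline{q}|_G=|(p^{-1})^{-1}\cdot q^{-1}|_G$ underlying its proof shows that, for an even or vertically even function, the optimal $d_L$-Lipschitz constant and the optimal $d_R$-Lipschitz constant in fact coincide.

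First I would note that the hypothesis that $u(\cdot,t)$ is even (resp.\ vertically even) for all $t\ge 0$ includes $t=0$, so the initial datum $u_0=u(\cdot,0)$ is itself even (resp.\ vertically even). Applying the constant-preserving form of Proposition \ref{prop evenness} to $u_0$, the assumption \eqref{initial lip2} that $u_0$ is $d_L$-Lipschitz with constant $L$ is equivalent to
\[
|u_0(p)-u_0(q)|\le L\, d_R(p,q)\quad\text{for all }p,q\in\mathbb{H}.
\]
Next I would invoke Theorem \ref{thm lip pre}, whose hypotheses (A1), (A2'), (A3) are precisely those assumed here, to obtain the right-invariant Lipschitz bound
\[
|u(p,t)-u(q,t)|\le (L+L_0 t)\, d_R(p,q)\quad\text{for all }p,q\in\mathbb{H},\ t\ge 0.
\]
Finally, fixing $t\ge 0$ and using that $u(\cdot,t)$ is even (resp.\ vertically even), I would apply Proposition \ref{prop evenness} in the reverse direction, again in its constant-preserving form, to transfer this $d_R$-bound back to a $d_L$-bound with the same constant, which yields \eqref{lip preserve2}. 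The final assertion that $L_0=0$ when $f$ is independent of $p$ follows at once, since Theorem \ref{thm lip pre} already produces $L_0=0$ in that case and the equivalence leaves the constant unchanged.

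The argument is essentially immediate once these ingredients are assembled, so there is no serious analytic obstacle; it is a bookkeeping of constants through two applications of the symmetry equivalence. The one point requiring care is precisely this twofold use with matching constants: one must read Proposition \ref{prop evenness} as an equality of optimal Lipschitz constants, rather than as a purely qualitative ``Lipschitz iff Lipschitz'' statement, since the conclusion \eqref{lip preserve2} records the explicit rate $L+L_0 t$. This sharper reading is justified directly by the gauge identity recalled above, which is exactly what makes the passage from $d_R$ to $d_L$ constant-preserving.
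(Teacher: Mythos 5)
Your proposal is correct and follows exactly the route the paper intends: convert \eqref{initial lip2} to a $d_R$-bound via the constant-preserving equivalence of Proposition \ref{prop evenness} (using the gauge identity $|p\cdot q^{-1}|_G=|\overline{p}^{-1}\cdot \overline{q}|_G=|(p^{-1})^{-1}\cdot q^{-1}|_G$), apply Theorem \ref{thm lip pre}, and convert back using the evenness of $u(\cdot,t)$. The paper states this corollary without a written proof, deferring precisely to this two-step reduction, and your emphasis on the equivalence being an equality of Lipschitz constants is the right reading.
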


\subsection{A special class of Hamilton-Jacobi equations} We discuss the Lipschitz preserving property for bounded solutions of a special class of first order Hamilton-Jacobi equations whose Hamiltonians depend only on the norm of horizontal gradient. More precisely, we study equations in the form of 
\begin{equation}\label{special hamiltonian}
u_t+m(|\nabla_H u|)=0 \quad \text{ in $\mathbb{H}\times (0, \infty)$,}
\end{equation}
where $m:\mathbb{R}\to \mathbb{R}$ is a locally uniformly continuous function, with initial condition $u(\cdot , 0)=u_0(\cdot )$ bounded Lipschitz continuous with respect to $d_L$ in $\mathbb{H}$. Since the assumption on $m$ is quite weak, our uniqueness and existence results for unbounded solutions in Section \ref{sec unique} do not apply.  

For solutions bounded in space, see \cite{MSt} for a uniqueness theorem and a Hopf-Lax formula when the Hamiltonian $\xi\mapsto m(|\xi|)$ is assumed to be convex and coercive. For instance, when $m(|\xi|)=|\xi|^2/2$, the unique solution of \eqref{special hamiltonian} can be expressed as 
\begin{equation}\label{hopf-lax}
u(p, t)=\inf_{q\in \mathbb{H}}\left\{{t\over 2}d_{CC}^2\left(0, \left(q^{-1}\cdot p\over t\right)\right)+u_0(q)\right\}.
\end{equation}

The Lipschitz preserving property (with respect to $d_L$ or $d_{CC}$) was left as an open question in \cite{MSt}; see also \cite{BCP} for a related open question but for more general Hamiltonians. In contrast to the Euclidean case, it is not obvious how to prove the Lipschitz continuity by using the Hopf-Lax formula \eqref{hopf-lax}. We here give an answer to this question using a PDE approach. 

\begin{thm}[Lipschitz preserving for special Hamilton-Jacobi equations]\label{thm lip HJ}
Suppose that $m: \mathbb{R}\to \mathbb{R}$ is locally uniformly continuous.  Let $u$ be the unique viscosity solution of \eqref{special hamiltonian} with $u(\cdot, 0)=u_0(\cdot )$ bounded in $\mathbb{H}$. If $u_0$ is Lipchitz with respect to $d_L$ in $\mathbb{H}$, i.e., there exists $L>0$ such that 
\eqref{initial lip2} holds for any $p, q\in \mathbb{H}$, then for all $t\geq 0$
\[
|u(p, t)-u(q, t)|\leq Ld_L(p, q)
\]
for all $p, q\in \mathbb{H}$.
\end{thm}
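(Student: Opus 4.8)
The plan is to run a penalized doubling-of-variables argument straight from the definition of viscosity solutions. I cannot invoke the comparison principle of Theorem \ref{comparison thm} here, since $m$ is only locally uniformly continuous and need not satisfy (A1); instead the first-order structure, together with the fact that the Hamiltonian depends on $\nabla_H u$ only through $|\nabla_H u|$, will let the argument close by itself. It suffices, for each fixed $T>0$, to prove
\[
\sigma_0:=\sup_{\mathbb{H}\times\mathbb{H}\times[0,T]}\big(u(p,t)-u(q,t)-L\,|p^{-1}\cdot q|_G\big)\le 0,
\]
because the supremum ranges over all ordered pairs and $d_L(p,q)=d_L(q,p)$, so $\sigma_0\le 0$ is already the two-sided estimate. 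I assume for contradiction that $\sigma_0>0$.

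First I would introduce, for small $\vep,\vep',\eta>0$, the penalized functional
\[
\Phi(p,q,t,s)=u(p,t)-u(q,s)-L\,|p^{-1}\cdot q|_G-\frac{(t-s)^2}{2\vep}-\frac{\eta}{T-t}-\vep'(\la p\ra+\la q\ra).
\]
Since $u$ is bounded and $\la\cdot\ra\to\infty$ at space infinity, $\Phi$ attains a maximum at some $(\ol p,\ol q,\ol t,\ol s)$. Standard penalization estimates give $\ol t-\ol s\to 0$ and $\vep^{-1}(\ol t-\ol s)^2\to 0$ as $\vep\to 0$; the Lipschitz bound on $u_0$ forces $\ol t,\ol s>0$ and the $\eta$-term keeps $\ol t$ bounded away from $T$, all for small parameters. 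A crucial point is that $\ol p\ne\ol q$: otherwise the value of $\Phi$ is at most $u(\ol p,\ol t)-u(\ol p,\ol s)$, which tends to $0$ with $\ol t-\ol s$ by the local uniform continuity of $u$, contradicting $\Phi\ge\sigma_0/2>0$. Hence the gauge $|p^{-1}\cdot q|_G$ is smooth near $(\ol p,\ol q)$ and may be used as a test function. Freezing the optimal variables and applying the viscosity sub/supersolution inequalities then yields
\[
a_1+m(|\zeta_1|)\le 0,\qquad a_2+m(|\zeta_2|)\ge 0,\qquad a_1-a_2=\frac{\eta}{(T-\ol t)^2}>0,
\]
with horizontal slopes $\zeta_1=L\,\nabla_H^p|p^{-1}\cdot q|_G\big|_{(\ol p,\ol q)}+O(\vep')$ and $\zeta_2=-L\,\nabla_H^q|p^{-1}\cdot q|_G\big|_{(\ol p,\ol q)}+O(\vep')$, the $O(\vep')$ corrections coming from $\vep'\nabla_H\la\cdot\ra$, which are uniformly bounded by the estimate $|\nabla_H\la p\ra|\le\mu$ from the proof of Theorem \ref{comparison thm}.

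The whole argument hinges on the following identity, which I expect to be the only delicate step: for $p\ne q$,
\[
\big|\nabla_H^p|p^{-1}\cdot q|_G\big|=\big|\nabla_H^q|p^{-1}\cdot q|_G\big|.
\]
Writing $r=p^{-1}\cdot q=(x_r,y_r,z_r)$, a direct computation shows both sides equal $\sqrt{x_r^2+y_r^2}/|r|_G\ (\le 1)$: the left-invariance of the horizontal fields gives $\nabla_H^q|p^{-1}\cdot q|_G=(\nabla_H|\cdot|_G)(r)$ directly, while $\nabla_H^p$ produces the same expression except through the cross term $x_r\,\partial_y|r|_G-y_r\,\partial_x|r|_G$, which vanishes because $\partial_x|r|_G$ and $\partial_y|r|_G$ are proportional to $x_r$ and $y_r$ respectively. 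In other words the two horizontal gradients differ only by a norm-preserving reflection. It is exactly here that the proof exploits both that the metric is the \emph{left} gauge $d_L$ and that the Hamiltonian is a function of $|\nabla_H u|$ alone: for a general $f(\nabla_H u)$ one would need $\zeta_1=\zeta_2$, which fails, whereas for $m(|\nabla_H u|)$ equality of norms already suffices.

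Granting the identity, $|\zeta_1|=|\zeta_2|+O(\vep')$ with both slopes bounded by $L+O(\vep')$, so the local uniform continuity of $m$ on this fixed bounded range gives $m(|\zeta_2|)-m(|\zeta_1|)\to 0$ as $\vep'\to 0$. Subtracting the two viscosity inequalities then produces
\[
0<\frac{\eta}{T^2}\le\frac{\eta}{(T-\ol t)^2}=a_1-a_2\le m(|\zeta_2|)-m(|\zeta_1|)\xrightarrow[\vep'\to0]{}0,
\]
a contradiction for the fixed $\eta$. Hence $\sigma_0\le 0$, and since $T>0$ is arbitrary the bound holds for all $t\ge 0$. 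Everything apart from the gradient identity is routine penalization bookkeeping, with the one extra care that $m$ is evaluated only where $|\nabla_H|p^{-1}\cdot q|_G|\le 1$ keeps the slopes bounded, so that its modulus of continuity is available.
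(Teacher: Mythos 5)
Your proof is correct, and it rests on exactly the same key fact as the paper's --- the identity $|\nabla_H^p\,|p^{-1}\cdot q|_G|=|\nabla_H^q\,|p^{-1}\cdot q|_G|$ (both equal to $(\delta_1^2+\delta_2^2)^{1/2}/d_L(p,q)\le 1$ in the paper's notation), which is precisely where the special structure $m(|\nabla_H u|)$ and the choice of the \emph{left} gauge enter --- but the implementation is genuinely different. You run a direct doubling-of-variables argument with the penalty $L\,d_L(p,q)$ itself, so you must (and correctly do) show that the maximizer satisfies $\ol p\ne\ol q$, since $d_L$ fails to be differentiable on the diagonal; the contradiction then comes from subtracting the two viscosity inequalities and invoking the modulus of continuity of $m$ on the compact range $[0,L+\mu]$ as $\vep'\to 0$. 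The paper instead first applies Young's inequality to replace the Lipschitz bound by the family of quartic bounds $C\,d_L(p,q)^4+3L\delta^{4/3}/4$, proves that the inf-convolution $u_L(p,t)=\inf_q\{u(q,t)+C\,d_L(p,q)^4\}$ is a supersolution (doubling only the time variable), and concludes by comparison of bounded solutions. What each route buys: the quartic gauge $d_L^4$ is smooth everywhere, so the paper never needs the off-diagonal argument, at the cost of the Young's-inequality detour, an optimization in $\delta$, and an appeal to a separate comparison principle; your version reaches the Lipschitz constant $L$ in one step and keeps the whole argument self-contained at the level of the definition of viscosity solutions, at the cost of the (routine, and correctly handled) bookkeeping showing $\ol p\ne\ol q$, $\ol t,\ol s>0$, and the correct order of limits $\vep\to0$ before $\vep'\to0$. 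Both proofs are valid.
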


\begin{proof}
Under the assumptions above, it is known \cite{MSt} that for any fixed $T>0$, there is a unique bounded continuous viscosity solution in $\mathbb{H}\times [0, T)$. We only need to show that 
\[
u(p, t)-u(q, t)\leq Ld_L(p, q)
\]
for all $p, q\in \mathbb{H}$ and $t\in [0, T)$. The other part can be shown by a symmetric argument.

By Young's inequality applied to \eqref{initial lip2}, we obtain
\begin{equation}\label{initial lip3}
u_0(p)-u_0(q)\leq {Ld_L(p, q)^4\over 4\delta^4}+{3L\delta^{4\over 3}\over 4}
\end{equation}
for all $\delta>0$ and $p, q\in \mathbb{H}$. It then suffices to show that
\begin{equation}\label{goal special Hamiltonian}
u(p, t)-u(q, t)\leq {Ld_L(p, q)^4\over 4\delta^4}+{3L\delta^{4\over 3}\over 4}
\end{equation}
for all $\delta>0$ and $p, q\in \mathbb{H}$. To this end, we fix $\delta>0$ and prove below that 
\[
u_L(p, t)=\inf_{q\in \mathbb{H}}\left\{u(q, t)+{Cd_L(p, q)^4}\right\}
\]
with $C=L/4\delta^4$ is a supersolution of \eqref{special hamiltonian}. Suppose there exist a bounded open set $\mathcal{O}\subset \mathbb{H}\times (0, T)$, $\phi\in C^2(\mathcal{O})$ and $(\hat{p}, \hat{t})\in \mathcal{O}$ such that
\[
(u_L-\phi)(\hat{p}, \hat{t})<(u_L-\phi)(p, t)
\]
for all $(p, t)\in \mathcal{O}$. We may also assume that $\phi(p, t)\to -\infty$ when $(p, t)\to \partial \mathcal{O}$. Then for any $\vep>0$ sufficiently small, 
\[
\Phi_\vep(p, q, t, s)=u(q, t)+{Cd_L(p, q)^4}-\phi(p, s)+{(t-s)^2\over \vep}
\]
attains a minimum at $(p_\vep, q_\vep, t_\vep, s_\vep)\in \mathbb{H}\times \mathbb{H}\times [0, \infty)\times [0, \infty)$. A standard argument yields $p_\vep, q_\vep\to \hat{p}$ and $t_\vep, s_\vep\to \hat{t}$ as $\vep\to 0$, which, in particular, implies that $t_\vep, s_\vep\neq 0$. The minimum also implies that 
\begin{equation}\label{derivative relation}
\nabla_H\phi_1(p_\vep)=\nabla_H \phi(p_\vep, s_\vep) \text{ and } \phi_t(p_\vep, s_\vep)={2(t_\vep-s_\vep)\over \vep},
\end{equation}
where $\phi_1(p)=Cd_L(p, q_\vep)^4$.

We next apply the definition of supersolutions and get
\begin{equation}\label{special v-inequality}
a+m(|\nabla_H\phi_2(q_\vep)|)\geq 0,
\end{equation}
where 
\[
a={2(t_\vep-s_\vep)\over \vep} \ \text{ and } \phi_2(q)=-Cd_L(p_\vep, q)^4.
\]
By \eqref{derivative relation}, in order to prove that $u_L$ is a supersolution, we only need to substitute $\nabla_H\phi_2(q_\vep)$ in \eqref{special v-inequality} with $\nabla_H\phi_1(p_\vep)$.  By direct calculation, we have
\[
\nabla_H^p d_L(p, q)^4=4 \bigg(\delta_1(\delta_1^2+\delta_2^2)-4\delta_2\delta_3,\ \delta_2(\delta_1^2+\delta_2^2)+4\delta_1\delta_3\bigg)
\]
and
\[
\nabla_H^q d_L(p, q)^4=4\bigg(-\delta_1(\delta_1^2+\delta_2^2)-4\delta_2\delta_3,\ -\delta_2(\delta_1^2+\delta_2^2)+4\delta_1\delta_3\bigg)
\]
with $p=(x_p, y_p, z_p)$, $q=(x_q, y_q, z_q)$ and
\[
\delta_1=x_p-x_q,\ \delta_2=y_p-y_q,\ \delta_3=z_p-z_q+{1\over 2}x_py_q-{1\over 2}x_q y_p,
\]
This reveals that $\nabla_H^p d_L(p, q)^4\neq -\nabla_H^q d_L(p, q)^4$ and therefore $\nabla_H\phi_1(p_\vep)\neq \nabla_H\phi_2(q_\vep)$ in general; {see \cite{FLM1} for more details on this aspect}. However, their norms stay the same, i.e., $|\nabla_H^p d_L(p, q)^4|=|\nabla_H^q d_L(p, q)^4|$, which turns out to be a key ingredient in this proof. In fact, we have 
\[
|\nabla_H^p d_L(p, q)^4|=|\nabla_H^q d_L(p, q)^4|=4 d_L(p, q)^2(\delta_1^2+\delta_2^2)^{1\over 2},
\]
which implies that $|\nabla_H\phi_1(p_\vep)|= |\nabla_H\phi_2(q_\vep)|$ and their boundedness uniformly in $\vep$. Hence, due to \eqref{derivative relation}, the equation \eqref{special v-inequality} is now rewritten as 
\[
\phi_t(p_\vep, s_\vep)+m(|\nabla_H \phi(p_\vep, s_\vep)|)\geq 0.
\]
By sending $\vep\to 0$ and using the continuity of $m$, we conclude the verification that $u_L$ is a supersolution. It follows that $v_L=u_L+3L\delta^{4\over 3}/4$ is also a supersolution of \eqref{special hamiltonian}. Thanks to \eqref{initial lip3}, we have $u(p, 0)\leq v_L(p, 0)$, which implies \eqref{goal special Hamiltonian} by Theorem \ref{comparison thm}. 
\end{proof}

\section{Convexity preserving properties}\label{sec convex}
It is well known that {the} convexity preserving property holds for a large class of fully nonlinear equations in the Euclidean space; see \cite{GGIS}. Concerning convexity in the Heisenberg group, the notion of h-convexity (and equivalently v-convexity) turns out to be a natural extension of the Euclidean version. However, we cannot expect such convexity {to be preserved} in general. In fact, h-convexity is not preserved even for the first order linear equation. 

\begin{example}[Linear first order equations]\label{linear first example}
We again consider the linear equation \eqref{linear first eqn} with $h_0=(1, 1)$ and $u(x, y, z, 0)=f(x, y, z)$ with $f$ defined as in \eqref{h-convex fun} for all $(x, y, z)\in \mathbb{H}$. Let $h=(1, 1, 0)\in \mathbb{H}_0$. As verified in Example \ref{ex h-convex}, $u(\cdot, 0)$ is h-convex in $\mathbb{H}$. However, the unique solution 
\begin{equation}\label{linear first solution}
u(p, t)=f(p\cdot ht)=(x+t)^2(y+t)^2+2\left(z+{1\over 2}xt-{1\over 2}yt\right)^2
\end{equation}
is not h-convex for any $t>0$. In fact, the symmetrized Hessian {is given by} 
\[
(\nabla^2_H u)^\ast(p, t)=\begin{pmatrix} 2(y+t)^2+(y-t)^2 & 4(x+t)(y+t)-(x-t)(y-t) \\ 4(x+t)(y+t)-(x-t)(y-t) & 2(x+t)^2+(x-t)^2\end{pmatrix}.
\]
It is therefore easily seen that 
\[
(\nabla^2_H u)^\ast (t, t, 0, t)=\begin{pmatrix} 8t^2 & 16 t^2\\ 16t^2 & 8t^2\end{pmatrix}, 
\]
which shows that $u(\cdot , t)$ is not h-convex around the point $p=(t, t, 0)\in \mathbb{H}$ for any $t>0$.

\end{example}

The loss of convexity preserving is due to the non commutativity of the Heisenberg group product. Although the h-convexity of a function is preserved under left translations, it is not necessarily preserved under right translations, as indicated in Example \ref{linear first example}. We therefore consider right invariant h-convexity next.

\subsection{Right invariant h-convexity preserving}

\begin{defi}[Right invariant h-convexity]\label{defi left convex}
Let $\Omega$ be an open set in $\mathbb{H}$ and $u: \Omega\to \mathbb{R}$ be an upper semicontinuous function. The function $u$ is said to be right invariant horizontally convex or right h-convex in $\Omega$, if for every $p\in \mathbb{H}$ and $h\in \mathbb{H}_0$ such that $[h^{-1}\cdot p, h\cdot p]\subset \Omega$, we have 
\begin{equation}\label{left h-convex eqn}
u(h^{-1}\cdot p)+u(h\cdot p)\geq 2u(p).
\end{equation}
\end{defi}

\begin{comment}
In order to avoid analysis at space infinity, we consider the following Cauchy-Dirichlet problem:
\begin{numcases}{}
u_t+F(\nabla_H u, (\nabla_H^2 u)^\ast)= 0 &\quad \text{ in $\Omega \times (0, \infty)$},\label{cd eqn}\\
u(\cdot, 0)=u_0 &\quad\text{ in $\overline{\Omega}$}, \label{cd initial}\\
u(x, t)=g(x, t)  &\quad\text{ for $(x, t) \in \partial\Omega\times [0, \infty)$}, \label{cd boundary}
\end{numcases}
where $\Omega$ is a bounded domain in $\mathbb{H}$. 
\end{comment}

\begin{thm}[Right invariant h-convexity preserving]\label{thm left h-convexity}
Suppose that the assumptions (A1), (A2) {and (A3)} hold. Let $u\in C(\mathbb{H}\times [0, \infty))$ be the unique viscosity solution of \eqref{special eqn}--\eqref{special initial} satisfying the growth condition (G).  Assume in addition that $f$ is right invariant concave in $\mathbb{H}\times \mathbb{R}^2$, i.e., 
\begin{equation}\label{assumption operator}
f(h^{-1}\cdot p, \xi)+f(h\cdot p, \eta)\leq 2f\left(p, {1\over 2}(\xi+\eta)\right)
\end{equation}
for all $p\in \mathbb{H}$, $h\in \mathbb{H}_0$ and $\xi, \eta\in \mathbb{R}^2$. %and $X, Y\in \mathbf{S}^2$. 
If $u_0$ is right invariant h-convex in $\mathbb{H}$, then so is $u(\cdot, t)$ for all $t\geq 0$.
\end{thm}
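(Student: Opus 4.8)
The plan is to reformulate the statement as a convexity maximum principle and prove it by the same doubling-of-variables technique used for Theorem \ref{comparison thm}. The underlying heuristic is instructive. If $u$ were smooth, then for fixed $h\in\mathbb{H}_0$ the functions $a(p,t)=u(h^{-1}\cdot p,t)$ and $b(p,t)=u(h\cdot p,t)$ would, by the \emph{left} invariance of $\nabla_H$ and $(\nabla_H^2\,\cdot)^\ast$, solve \eqref{special eqn} with $f(p,\cdot)$ replaced by $f(h^{-1}\cdot p,\cdot)$ and $f(h\cdot p,\cdot)$ respectively. A direct computation then gives, for $m=\tfrac12(a+b)$,
\[
m_t-\tr\!\big(A(\nabla_H^2 m)^\ast\big)+f(p,\nabla_H m)=f\big(p,\tfrac12(\nabla_H a+\nabla_H b)\big)-\tfrac12\big[f(h^{-1}\!\cdot p,\nabla_H a)+f(h\cdot p,\nabla_H b)\big]\geq 0,
\]
where the inequality is exactly the right invariant concavity \eqref{assumption operator}. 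Thus $m$ would be a supersolution, and since $m(\cdot,0)\geq u_0$ by the initial right h-convexity, Theorem \ref{comparison thm} would give $m\geq u$, i.e.\ \eqref{left h-convex eqn} for $u(\cdot,t)$.

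Because $a,b$ are only viscosity solutions, the assertion that $m$ is a supersolution cannot be read off directly, so I would instead argue by contradiction along the lines of the proof of Theorem \ref{comparison thm}. Assume that
\[
\sup_{p\in\mathbb{H},\,h\in\mathbb{H}_0,\,t\in[0,T)}\big[2u(p,t)-u(h^{-1}\cdot p,t)-u(h\cdot p,t)\big]>0
\]
for some $T>0$. Introduce three independent spatial variables together with three time variables, and penalize so as to (i) force the three points into the right h-collinear configuration $q_1=h^{-1}\cdot p$, $q_2=h\cdot p$ as $\vep\to0$, (ii) glue the time variables, and (iii) control growth at infinity by the weight $g$ from \eqref{penalty} together with the localizer $\sigma/(T-t)$. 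As in the comparison proof, the growth condition (G) makes the penalized functional coercive, so a maximizer exists and stays bounded; the initial right h-convexity of $u_0$ forces the maximizing time to be positive, since at $t=0$ the bracketed quantity is nonpositive.

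At the maximizer I would apply the Heisenberg adaptation of the Crandall--Ishii lemma \cite{Mnotes, Bi2} to obtain a subjet of $u$ at the center $p$ and superjets at the two endpoints, subject to a matrix inequality of the type \eqref{ishii}. Using $u$ as a subsolution at $p$ and as a supersolution at $q_1,q_2$, then taking twice the subsolution inequality and subtracting the two supersolution inequalities, the time-derivative terms reproduce a strictly positive contribution $\sigma/(T-t)^2$ after choosing $\alpha$ large enough to absorb the $g$-terms, exactly as in \eqref{time der}. The second-order terms $\tr\!\big(A(2X-Y_1-Y_2)\big)$ are dominated by the penalty Hessian contracted with the vectors $w_{p}\oplus w_{q}$ and are made negligible for small $\vep$ by the same non-commutative estimates as in \eqref{bieske}--\eqref{est2}. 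Finally the first-order terms combine into $2f(p,\xi_0)-f(q_1,\xi_1)-f(q_2,\xi_2)$, which is nonnegative by \eqref{assumption operator} once one knows that in the limit $\xi_0=\tfrac12(\xi_1+\xi_2)$ while $q_1\to h^{-1}\cdot p$ and $q_2\to h\cdot p$. These three facts together contradict the viscosity inequality, proving the claim.

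The main obstacle is the first-order alignment $\xi_0=\tfrac12(\xi_1+\xi_2)$. Unlike the Euclidean midpoint argument, here the endpoints are \emph{left} translates $h^{\pm1}\cdot p$ of the center and the horizontal gradient is not right invariant; the alignment must be extracted from the configuration penalty using the left invariance of $\nabla_H$, which is precisely what synchronizes the three horizontal gradients so that \eqref{assumption operator} applies. Intertwined with this is the algebraic burden of the second-order estimates in the non-commutative setting, which, as already in the proof of Theorem \ref{comparison thm}, is expected to require computer-algebra assistance. It is the absence of a global, metric midpoint characterization of h-convexity that forces the concavity hypothesis \eqref{assumption operator}, tailored exactly to the surviving first-order term.
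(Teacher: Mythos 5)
Your proposal follows essentially the same route as the paper's proof: argue by contradiction, triple the space and time variables with a right-invariant gauge penalty plus the exponential weight $g$ and the $\sigma/(T-t)$ localizer, apply the Heisenberg Crandall--Ishii lemma, extract the gradient alignment $2\xi_3=\xi_1+\xi_2$ from the penalty, and close the argument with the concavity hypothesis \eqref{assumption operator}, (A1)--(A2), and the second-order estimates. The only cosmetic differences are that the paper absorbs the translations into auxiliary functions $u_\mp(p,t)=u(h^{\mp 1}\cdot p,t)$ satisfying left-translated equations and adds an explicit penalty $\sigma/(m-|h|_G^4)$ to localize the maximization in $h$, neither of which changes the substance of your argument.
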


\begin{comment}
\begin{rmk}
Combined with the ellipticity \eqref{ellipticity}, the concavity assumption \eqref{assumption operator} is equivalent to the following:
\[
F(\xi, X)+F(\eta, Y)\leq 2F\left({1\over 2}(\xi+\eta), Z\right)
\]
for all $\xi, \eta\in \mathbb{R}^2$ and $X, Y, Z\in \mathbf{S}^2$ satisfying $X+Y\geq 2Z$.
\end{rmk}
\end{comment}

\begin{proof}[Proof of Theorem \ref{thm left h-convexity}]
By definition, we aim to show that
\[
u(h^{-1}\cdot p, t)+u(h\cdot p, t)\geq 2u(p, t)
\]
for any $p\in \mathbb{H}, h\in \mathbb{H}_0, t\geq 0$. We assume by contradiction that there exist $(p_0, h_0, t_0)\in \mathbb{H}\times \mathbb{H}_0\times [0, \infty)$ such that 
\[
u(h_0^{-1}\cdot p_0, t_0)+u(h_0\cdot p_0, t_0)< 2u(p_0, t_0).
\]
Then there exists a positive maximizer $(\hat{p}, \hat{h}, \hat{t})\in \mathbb{H}\times \mathbb{H}_0\times [0, T)$ of 
\[
2u(p, t)-u(h^{-1}\cdot p, t)-u(h\cdot p ,t)-3 \sigma g(p, t)-{\sigma\over m-|h|_G^4}-{\sigma \over T-t}
\]
with some constants $m>|\hat{h}|_G^4$, $T>\hat{t}$ and $\sigma>0$ small. Here $g(p, t)=e^{\alpha t+\beta \la p\ra}$ with $\alpha>0$ to be determined later and any fixed $\beta>k$. We next consider 
\[
\begin{aligned}
\Phi(p, q, r, h, t, s, \tau)=2u(r, \tau)-u(h^{-1}\cdot p, t)& -u(h\cdot q, s)-\sigma \Psi_\vep(p, q, r, t, s, \tau)\\ -\psi_\vep(t, s, \tau)
& -{\sigma \over m-|h|_G^4}-{\sigma \over T-\tau},
\end{aligned}
\]
where
\[
\psi_\vep(t, s, \tau)={(t-s)^2\over \vep}+{(t-\tau)^2\over \vep}+{(s-\tau)^2\over \vep},
\]
\[
\Psi_\vep(p, q, r, t, s, \tau)=\phi_\vep(p, q, r)+ K(p, q, r, t, s, \tau)\]
with
\[
\phi_\vep(p, q, r)={|p\cdot r^{-1}|^4\over \vep}+{|q\cdot r^{-1}|^4\over \vep}
\]
and 
\[
K(p, q, r, t, s, \tau)=g(r, \tau)+g(p, t)+g(q, s).
\]
It follows that $\Phi$ has a maximizer $(p_\vep, q_\vep, r_\vep, h_\vep, t_\vep, s_\vep, \tau_\vep)$. As before, we denote 
\[
p_\vep=(x_{p_\vep}, y_{p_\vep}, z_{p_\vep}),\ q_\vep=(x_{q_\vep}, y_{q_\vep}, z_{q_\vep}),\ r_\vep=(x_{r_\vep}, y_{r_\vep}, z_{r_\vep}).
\]

 Due to the penalization at space infinity, we have $\phi_\vep(p_\vep, q_\vep, r_\vep)$ bounded from above uniformly in $\vep$. 
By a standard argument, we can show that there exists $\overline{p}\in \mathbb{H},\overline{h}\in \mathbb{H}_0$ and $\overline{t}\in [0, T)$ such that, up to a subsequence, 
\begin{equation}\label{convex limit1}
p_\vep, q_\vep, r_\vep\to \overline{p},\quad h_\vep\to \overline{h}, \quad t_\vep, s_\vep, \tau_\vep\to \overline{t}
\end{equation}
and 
\begin{equation}\label{convex limit2}
\phi_\vep(p_\vep, q_\vep, r_\vep)\to 0
\end{equation}
as $\vep\to 0$.

Since $u_0$ is right invariant h-convex, we have $\overline{t}>0$ and therefore $t_\vep, s_\vep, \tau_\vep>0$ when $\vep$ is sufficiently small. 

Denote $u_-(p, t)=u(h_\vep^{-1}\cdot p, t)$ and $u_+(p, t)=u(h_\vep\cdot p, t)$.
We now apply the Crandall-Ishii lemma in the Heisenberg group and get, for any $\lambda\in (0, 1)$,
\[
\begin{aligned}
(a_1, \zeta_1, X_1)\in & \overline{J}^{2, -} u_-(p_\vep, t_\vep),  \ (a_2, \zeta_2, X_2)\in \overline{J}^{2, -} u_+(q_\vep, s_\vep),\\
 & (a_3, \zeta_3, X_3)\in \overline{J}^{2, +} u(r_\vep, \tau_\vep)
\end{aligned}
\]
such that 
\begin{equation}\label{convex time der}
2a_3-a_1-a_2={\sigma \over (T-\tau_\vep)^2}+\sigma \alpha K(p_\vep, q_\vep, r_\vep, t_\vep, s_\vep, \tau_\vep),
\end{equation}
\smallskip
the horizontal projections of $\zeta_i$ can be expressed as $\xi_i+\eta_i$ ($i=1, 2, 3$) with
\[
-\xi_1=\sigma \nabla_H^p \phi_\vep(p_\vep, q_\vep, r_\vep),\ -\xi_2=\sigma \nabla_H^q \phi_\vep(p_\vep, q_\vep, r_\vep), \ 2\xi_3=\sigma \nabla_H^r \phi_\vep(p_\vep, q_\vep, r_\vep),
\]
\[
-\eta_1=\sigma \nabla_H g(p_\vep, t_\vep), \ -\eta_2=\sigma \nabla_H g(q_\vep, s_\vep), \ 2\eta_3=\sigma \nabla_H g(r_\vep, \tau_\vep),
\]
and
\smallskip
\begin{equation}\label{ishii2}
\langle (2X_3-X_1-X_2) w, w\rangle \leq \la (\sigma M+\lambda \sigma^2 M^2) w_{p_\vep}\oplus w_{q_\vep} \oplus w_{r_\vep}, w_{p_\vep}\oplus w_{q_\vep}\oplus w_{r_\vep}\ra,
\end{equation}

\noindent for all $w\in \mathbb{R}^2$, where $M=\nabla^2 \Psi_\vep(p_\vep, q_\vep, r_\vep, h_\vep, t_\vep, s_\vep, \tau_\vep)$ is a $9\times 9$ symmetric matrix, $w_{p_\vep}$, $w_{q_\vep}$ are respectively taken as in \eqref{wpe} and \eqref{wqe}, 
and 
\[
w_{r_\vep}=\left(w_1, w_2, {1\over 2}w_2 x_{r_\vep}-{1\over 2}w_1 y_{r_\vep}\right).
\]
By calculation, it is easily seen that 
\begin{equation}\label{gradient relation}
2\xi_3=\xi_1+\xi_2.
\end{equation}

We next set 
\[
M_1=\nabla^2 \phi_\vep(p_\vep, q_\vep, r_\vep)
\]
and
\[
M_2 =\nabla^2 K (p_\vep, q_\vep, r_\vep, t_\vep, s_\vep, \tau_\vep)
\]
Then $M=M_1+M_2$.  {To investigate the right hand side of \eqref{ishii2}, we first give estimates the terms involving $M_1$, which is a variant of \eqref{bieske} for three space variables. Note that 
\[
M_1=M_1' +M_1'',
\]
where 
\[
M_1'={1\over \vep}\nabla^2 {|p\cdot r^{-1}|^4}, \quad M_1''={1\over \vep}\nabla^2 {|q\cdot r^{-1}|^4}.\]
By direct calculations, we get
\[
\begin{aligned}
&\la M_1' w_{p_\vep}\oplus w_{q_\vep}\oplus w_{r_\vep}, w_{p_\vep}\oplus w_{q_\vep}\oplus w_{r_\vep}\ra=0,\\
&\la M_1'' w_{p_\vep}\oplus w_{q_\vep}\oplus w_{r_\vep}, w_{p_\vep}\oplus w_{q_\vep}\oplus w_{r_\vep}\ra=0
\end{aligned}
\]
and
\[
\begin{aligned}
&\la M_1'^2 w_{p_\vep}\oplus w_{q_\vep}\oplus w_{r_\vep}, w_{p_\vep}\oplus w_{q_\vep}\oplus w_{r_\vep}\ra={512 \over \vep} m_1^2|w|^2,\\
&\la M_1''^2 w_{p_\vep}\oplus w_{q_\vep}\oplus w_{r_\vep}, w_{p_\vep}\oplus w_{q_\vep}\oplus w_{r_\vep}\ra={512 \over \vep} m_2^2|w|^2,\\
&\la M_1'M_1'' w_{p_\vep}\oplus w_{q_\vep}\oplus w_{r_\vep}, w_{p_\vep}\oplus w_{q_\vep}\oplus w_{r_\vep}\ra={256 \over \vep} m_1m_2|w|^2,
\end{aligned}
\]
where 
\[
m_1=z_{p_\vep}-z_{r_\vep}+{1\over 2}y_{p_\vep} x_{r_\vep}-{1\over 2}x_{p_\vep} y_{r_\vep},\quad 
m_2=z_{q_\vep}-z_{r_\vep}+{1\over 2}y_{q_\vep} x_{r_\vep}-{1\over 2}x_{q_\vep} y_{r_\vep}.
\]
}
It follows that 
\begin{equation}\label{bieske convex0}
\la (M_1+\lambda M_1^2) w_{p_\vep}\oplus w_{q_\vep}\oplus w_{r_\vep}, w_{p_\vep}\oplus w_{q_\vep}\oplus w_{r_\vep}\ra\leq 
{512\lambda\over \vep} (m_1^2+m_2^2+ m_1m_2)|w|^2,
\end{equation}
which implies that
\begin{equation}\label{bieske convex}
\la (M_1+\lambda M_1^2) w_{p_\vep}\oplus w_{q_\vep}\oplus w_{r_\vep}, w_{p_\vep}\oplus w_{q_\vep}\oplus w_{r_\vep}\ra\leq {C\over \vep}|w|^2(|p_\vep\cdot r_\vep^{-1}|^4+|q_\vep\cdot r_\vep^{-1}|^4)
\end{equation}
for some $C>0$ independent of $\vep$ and $\lambda$. On the other hand, with the help of computer algebra system, we obtain estimates similar to \eqref{heisenberg second0}, \eqref{heisenberg second1} and \eqref{heisenberg second2}. In fact, we get a constant $C_\beta$ such that, when $\lambda>0$ is small enough (depending on $\vep$),\\
\begin{equation}\label{heisenberg second00}
\la  M_2 (w_{p_\vep}\oplus w_{q_\vep}\oplus w_{r_\vep}), (w_{p_\vep}\oplus w_{q_\vep}\oplus w_{r_\vep}) \ra\leq C_\beta|w|^2K(p_\vep, q_\vep, r_\vep, t_\vep, s_\vep, \tau_\vep)
\end{equation}

\begin{equation}\label{heisenberg second3}
\begin{aligned}
\la \lambda &(M_1M_2+M_2M_1+M_2^2) (w_{p_\vep}\oplus w_{q_\vep}\oplus w_{r_\vep}), (w_{p_\vep}\oplus w_{q_\vep}\oplus w_{r_\vep}) \ra\\
&\leq 2C_\beta|w|^2K(p_\vep, q_\vep, r_\vep, t_\vep, s_\vep, \tau_\vep)
\end{aligned}
\end{equation}

\noindent for some constant $\mu>0$ independent of $\vep, \beta$ and $\sigma$ and satisfying \eqref{penalty est}. As remarked in the proof of  Theorem \ref{comparison thm}, we obtain the constant $C_\beta$ thanks to the boundedness of $\nabla_H \la p\ra$ and $\nabla^2_H \la p \ra$ in $\mathbb{H}$.

Combining \eqref{ishii2}, \eqref{bieske convex} and \eqref{heisenberg second3}, we have 
\begin{equation}\label{est2 convex}
\begin{aligned}
\la (2X_3-X_1-X_2) & w, w\ra \leq {C\sigma \over \vep}\|w\|^2(|p_\vep\cdot r_\vep^{-1}|^4+|q_\vep\cdot r_\vep^{-1}|^4)\\
&+2\sigma \|w\|^2 C_\beta K(p_\vep, q_\vep, r_\vep, t_\vep, s_\vep, \tau_\vep)
.
\end{aligned}
\end{equation}
when $\lambda>0$ and $\sigma>0$ are sufficiently small. 

Since the horizontal derivatives are left translation invariant, the functions $u_-$ and $u_+$ are respectively solutions of 
\[
(u_-)_t-\tr (A \nabla^2_Hu_-)+ f(h_\vep^{-1}\cdot p, \nabla_H u_-)=0 \quad \text{ in $\mathbb{H}\times (0, \infty)$}
\]
and
\[
(u_+)_t-\tr (A \nabla^2_Hu_+)+ f(h_\vep\cdot p, \nabla_H u_+)=0 \quad \text{ in $\mathbb{H}\times (0, \infty)$}.
\]

 Applying the definition of viscosity subsolutions and supersolution, we have
\begin{equation}\label{solution1}
a_1-\tr (AX_1)+f(h_\vep^{-1}\cdot p_\vep, \xi_1+\eta_1)\geq 0,
\end{equation}
\begin{equation}\label{solution2}
a_2-\tr (AX_2)+f(h_\vep\cdot q_\vep, \xi_2+\eta_2)\geq 0,
\end{equation}
\begin{equation}\label{solution3}
a_3-\tr (AX_3)+f(r_\vep, \xi_3+\eta_3)\leq 0.
\end{equation}

Subtracting \eqref{solution1} and \eqref{solution2} from twice \eqref{solution3}, we get
\begin{equation}\label{est3 convex}
2a_3-a_1-a_2\leq \tr A(2X_3-X_1-X_2)+E,
\end{equation}
where 
\[
E=f(h_\vep^{-1}\cdot p_\vep, \xi_1+\eta_1)+f(h_\vep\cdot q_\vep, \xi_2+\eta_2)-2f(r_\vep, \xi_3+\eta_3)
\]
It follows from the concavity assumption \eqref{assumption operator}, the relation \eqref{gradient relation} and (A1)-(A2) that 
\begin{equation}\label{est4 convex}
\begin{aligned}
E\leq & \ f(h_\vep^{-1}\cdot p_\vep, \xi_1+\eta_1)-f(h_\vep^{-1}\cdot r_\vep, \xi_1+\eta_1)+f(h_\vep\cdot q_\vep, \xi_2+\eta_2)-f(h_\vep\cdot r_\vep, \xi_2+\eta_2)\\
&+f(h_\vep^{-1}\cdot r_\vep, \xi_1+\eta_1)+f(h_\vep\cdot r_\vep, \xi_2+\eta_2)-2f(r_\vep, {1\over 2}(\xi_1+\xi_2+\eta_1+\eta_2))\\
&+2f(r_\vep, {1\over 2}(\xi_1+\xi_2+\eta_1+\eta_2))-2f(r_\vep, \xi_3+\eta_3)\\
&\leq L_R(|h_\vep^{-1}\cdot r_\vep\cdot p_\vep^{-1}\cdot h|+|h_\vep^{-1}\cdot r_\vep\cdot q_\vep^{-1}\cdot h_\vep|)+L_f|\eta_1+\eta_2-2\eta_3|
\end{aligned}
\end{equation}
with $R=(|\overline{p}|+1)$ and $\vep>0$ small. Also, by \eqref{penalty est}, we have
\[
|\eta_1+\eta_2-2\eta_3|\leq 2(|\eta_1|+|\eta_2|+|\eta_3|)=2\sigma \beta\mu K(p_\vep, q_\vep, r_\vep, t_\vep, s_\vep, \tau_\vep).
\]

In view of \eqref{est2 convex}, \eqref{est3 convex} and \eqref{est4 convex}, we then obtain
\begin{equation}\label{convex inequality}
\begin{aligned}
& 2a_3-a_1-a_2\\
\leq & {C\sigma \over \vep}(|p_\vep\cdot r_\vep^{-1}|^4+|q_\vep\cdot r_\vep^{-1}|^4)+L_R|h_\vep^{-1}\cdot r_\vep\cdot p_\vep^{-1}\cdot h|+L_R|h_\vep^{-1}\cdot r_\vep\cdot q_\vep^{-1}\cdot h_\vep|\\
& +2\sigma  (C_\beta\|A\|+L_f\beta\mu)K(p_\vep, q_\vep, r_\vep, t_\vep, s_\vep, \tau_\vep)
.
\end{aligned}
\end{equation}
In view of \eqref{convex limit1} and \eqref{convex limit2}, we can take $\vep>0$ small such that 
\[
{C\sigma \over \vep}(|p_\vep\cdot r_\vep^{-1}|^4+|q_\vep\cdot r_\vep^{-1}|^4)+L_R|h_\vep^{-1}\cdot r_\vep\cdot p_\vep^{-1}\cdot h|+L_R|h_\vep^{-1}\cdot r_\vep\cdot q_\vep^{-1}\cdot h_\vep|<{\sigma \over T^2},
\]
which, by \eqref{convex inequality}, implies
\[
2a_3-a_1-a_2\leq {\sigma \over T^2}+2\sigma  (C_\beta\|A\|+L_f\beta\mu)K(p_\vep, q_\vep, r_\vep, t_\vep, s_\vep, \tau_\vep).
\]
It clearly contradicts \eqref{convex time der} when $\alpha$ is chosen to satisfy
\[
\alpha> 2\|A\|C_\beta+2L_f\beta\mu.
\]
\end{proof}
\begin{rmk}
The concavity assumption \eqref{assumption operator} on the operator $f$ is stronger than the assumptions of the convexity results in the Euclidean space as shown in \cite{GGIS, Ju}. In particular, the concavity of $\xi\mapsto f(p, \xi)$ is not needed in the Euclidean case. We here need this assumption, since there are no expressions of h-convexity in $\mathbb{H}$ corresponding to the following one for the Euclidean convexity
\[
u(\xi)+u(\eta)\geq 2u\left({\xi+\eta\over 2}\right)
\] 
for all $\xi, \eta\in \mathbb{R}^n$. It is not clear to us whether the assumption \eqref{assumption operator} can be weakened. 
\end{rmk}
\begin{example}
Let us revisit Example \ref{linear first example}. Since the equation \eqref{linear first eqn} and the solution \eqref{linear first solution}  satisfy all of the assumptions in Theorem \ref{thm left h-convexity}, the right invariant h-convexity of the solution is preserved, though the h-convexity is not. Indeed, if $u(p, t)$ is given by \eqref{linear first solution}, then by direct calculation we obtain, for all $p=(x, y, z)$, $h=(h_1, h_2, 0)$ and $t\geq 0$,
\[
\begin{aligned}
&u(h\cdot p, t)+u(h^{-1}\cdot p, t)\\
=&\  (x+h_1+t)^2(y+h_2+t)^2+2\left(z+{1\over 2}h_1y-{1\over 2}h_2x+{1\over 2}(x+h_1)t-{1\over 2}(y+h_2)t\right)^2\\
=&\ 2(x+t)^2(y+t)^2+4\left(z+{1\over 2}xt-{1\over 2}yt\right)^2+(h_1(y+t)-h_2(x+t))^2+2h_1^2(y+t)^2\\
&\ +2h_2^2(x+t)^2 +8(x+t)(y+t)h_1h_2+2h_1^2h_2^2\\
=&\ 2(x+t)^2(y+t)^2+4\left(z+{1\over 2}xt-{1\over 2}yt\right)^2+ 3(h_1(y+t)+h_2(y+t))^2+2h_1^2h_2^2\\
\geq &\ 2(x+t)^2(y+t)^2+4\left(z+{1\over 2}xt-{1\over 2}yt\right)^2=2u(p, t).
\end{aligned}
\]
\end{example}

\subsection{Left invariant h-convexity preserving}

We next discuss some special cases, where h-convexity and right invariant h-convexity are equivalent.

\begin{prop}[Evenness]\label{evenness lemma}
Let $u$ be an even or vertically even function on $\mathbb{H}$ . Then $u$ is h-convex in $\mathbb{H}$ if and only if $u$ is right invariant h-convex in $\mathbb{H}$. 
\end{prop}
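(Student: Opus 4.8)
The plan is to exploit the fact that each symmetry hypothesis converts left translations into right translations by a change of base point, so that the defining inequality for right invariant h-convexity (Definition \ref{defi left convex}) at a point $p$ turns into the defining inequality for h-convexity (Definition \ref{defi h-convex}) at an associated point. Since the change of base point is a bijection of $\mathbb{H}$, the two convexity notions then coincide.

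First I would record the two algebraic facts that drive everything. Group inversion is an anti-automorphism, $(p\cdot q)^{-1}=q^{-1}\cdot p^{-1}$, and the vertical reflection $p\mapsto\overline{p}$ of \eqref{reflection} is \emph{also} an anti-automorphism, i.e.\ $\overline{p\cdot q}=\overline{q}\cdot\overline{p}$; the latter is a one-line check from the explicit group law, the point being that reflecting $z$ flips the sign of the commutator term $\tfrac12(x_py_q-x_qy_p)$, which is precisely the effect of reversing the order of the product. Moreover both maps stabilize the horizontal plane: $h^{-1}\in\mathbb{H}_0$ and $\overline{h}=h$ for every $h\in\mathbb{H}_0$.

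Next, in the even case, suppose $u$ is even and right h-convex, fix $p\in\mathbb{H}$, $h\in\mathbb{H}_0$, and set $q=p^{-1}$. Evenness and the inversion anti-automorphism give $u(h\cdot p)=u(p^{-1}\cdot h^{-1})=u(q\cdot h^{-1})$ and $u(h^{-1}\cdot p)=u(q\cdot h)$, while $u(p)=u(q)$. Hence $u(h^{-1}\cdot p)+u(h\cdot p)\ge 2u(p)$ is literally the h-convexity inequality $u(q\cdot h)+u(q\cdot h^{-1})\ge 2u(q)$. Because $p\mapsto p^{-1}$ is a bijection of $\mathbb{H}$ and $\mathbb{H}_0$ is stable under $h\mapsto h^{-1}$, letting $p,h$ range over all admissible choices yields both implications at once. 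The vertically even case runs identically with $\overline{p}$ in place of $p^{-1}$: using $\overline{h\cdot p}=\overline{p}\cdot\overline{h}=\overline{p}\cdot h$ and $\overline{h^{-1}\cdot p}=\overline{p}\cdot h^{-1}$ together with $u(p)=u(\overline{p})$, the right h-convexity inequality at $p$ becomes the h-convexity inequality at $\overline{p}$, and $p\mapsto\overline{p}$ is again a bijection.

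There is essentially no analytic obstacle here: upper semicontinuity is preserved under the continuous involutions $p\mapsto p^{-1}$ and $p\mapsto\overline{p}$, and since the domain is all of $\mathbb{H}$ the segment-containment conditions in the two definitions are vacuous. The only step requiring genuine care is verifying that the reflection is an anti-automorphism rather than an automorphism; I would confirm this by a direct computation with the multiplication law, where tracking the sign of the $\tfrac12(x_py_q-x_qy_p)$ term is the one place an error could slip in.
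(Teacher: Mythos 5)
Your argument is correct and is essentially the paper's own proof: both rest on the observation that inversion $p\mapsto p^{-1}$ and the vertical reflection $p\mapsto\overline{p}$ are anti-automorphisms of $\mathbb{H}$ fixing $\mathbb{H}_0$ (setwise), so under either symmetry hypothesis the right invariant h-convexity inequality at $p$ is literally the h-convexity inequality at $p^{-1}$ (resp.\ $\overline{p}$), and the bijectivity of these involutions gives both implications. Your write-up is in fact slightly more careful than the paper's, which compresses the two cases into one chain of equivalences.
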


\begin{proof}
By definition, $u$ is h-convex if $u$ satisfies \eqref{h-convex eqn} for any $p\in \mathbb{H}$ and $h\in \mathbb{H}_0$. Since $u$ is even, it is easily seen that \eqref{h-convex eqn} holds if and only if 
\[
u(h\cdot \overline{p})+u(h^{-1}\cdot \overline{p})\geq u(\overline{p}), 
\]
where $\overline{p}$ is given as in \eqref{reflection}, or 
\[
u(h\cdot p^{-1})+u(h^{-1}\cdot p^{-1})\geq u(p^{-1}) 
\]
for all $p\in \mathbb{H}$ and $h\in \mathbb{H}_0$, which is equivalent to saying 
\[
u(h\cdot p)+u(h^{-1}\cdot p)\geq u(p) \text{ for all $p\in \mathbb{H}$ and $h\in \mathbb{H}_0$}.
\]

\end{proof}

Another sufficient condition for equivalence between the h-convexity and the left h-convexity of a function $u$ on $\mathbb{H}$ is that $u$ has a separate structure; namely, 
\begin{equation}\label{separate}
u(x, y, z)=f(x, y)+g(z)
\end{equation}
for any $(x, y, z)\in \mathbb{H}$. 

\begin{prop}[Separability]\label{separability lemma}
Let $u$ be a function on $\mathbb{H}$ with a separate structure as in \eqref{separate}.  Then $u$ is h-convex in $\mathbb{H}$ if and only if $u$ is right invariant h-convex in $\mathbb{H}$. 
\end{prop}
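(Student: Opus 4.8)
The plan is to reduce both convexity inequalities to the \emph{same} expression by a direct computation with the group law, exploiting the separable structure $u(x,y,z)=f(x,y)+g(z)$.

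First I would write out explicitly, for a fixed $p=(x,y,z)\in\mathbb{H}$ and $h=(h_1,h_2,0)\in\mathbb{H}_0$, the four translated points appearing in the two definitions. Using the Heisenberg multiplication and abbreviating $\Delta=\tfrac12(xh_2-h_1y)$, one computes
\[
p\cdot h=(x+h_1,\,y+h_2,\,z+\Delta),\qquad p\cdot h^{-1}=(x-h_1,\,y-h_2,\,z-\Delta),
\]
\[
h\cdot p=(x+h_1,\,y+h_2,\,z-\Delta),\qquad h^{-1}\cdot p=(x-h_1,\,y-h_2,\,z+\Delta).
\]
The crucial observation is that the horizontal $(x,y)$-coordinates of $p\cdot h$ and of $h\cdot p$ coincide (both equal $(x+h_1,y+h_2)$), and likewise those of $p\cdot h^{-1}$ and $h^{-1}\cdot p$ coincide; the only discrepancy is in the vertical coordinate, where switching from a right translation to a left translation merely flips the sign of $\Delta$.

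Next I would substitute the separable form into each inequality. For h-convexity,
\[
u(p\cdot h^{-1})+u(p\cdot h)=f(x-h_1,y-h_2)+f(x+h_1,y+h_2)+g(z-\Delta)+g(z+\Delta),
\]
whereas for right invariant h-convexity,
\[
u(h^{-1}\cdot p)+u(h\cdot p)=f(x-h_1,y-h_2)+f(x+h_1,y+h_2)+g(z+\Delta)+g(z-\Delta).
\]
The two $f$-contributions agree because the horizontal coordinates match, and the two $g$-contributions agree by commutativity of addition. Since $2u(p)=2f(x,y)+2g(z)$ is common to both, the left-invariant inequality \eqref{h-convex eqn} and the right-invariant inequality \eqref{left h-convex eqn} coincide for every choice of $p$ and $h$. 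Hence one holds if and only if the other does, which is exactly the assertion.

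There is essentially no analytic obstacle here: the entire content is the elementary but sign-sensitive computation of the four products in the first step. The only point demanding care is correctly tracking the sign of the vertical term $\Delta$ under left versus right translation, and then recognizing that the separable structure renders this sign irrelevant, since $g$ is always evaluated at the symmetric pair $z\pm\Delta$ in both sums. I would close by remarking that this is the structural reason separability suffices, paralleling the role of evenness in Proposition \ref{evenness lemma}: it decouples the horizontal variables, which transform identically under both translations, from the vertical variable, whose sign flip is absorbed once $g$ is summed over the symmetric pair.
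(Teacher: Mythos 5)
Your proof is correct and follows essentially the same route as the paper: both compute the four translated points explicitly, observe that the separable structure makes the sum $u(p\cdot h^{-1})+u(p\cdot h)$ identical to $u(h^{-1}\cdot p)+u(h\cdot p)$ (the horizontal parts coincide and the vertical arguments $z\pm\Delta$ form the same symmetric pair), and conclude that the two convexity inequalities are literally the same. No gaps.
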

\begin{proof}
Suppose $u$ can be written as in \eqref{separate}. Setting $p=(x, y, z)$ and $h=(h_1, h_2)$, we then have
\[
\begin{aligned}
&u(p\cdot h)=f(x+h_1, y+h_2)+g(z+{1\over 2}xh_2-{1\over 2}yh_1);\\
&u(p\cdot h^{-1})=f(x-h_1, y-h_2)+g(z-{1\over 2}xh_2+{1\over 2}yh_1);\\
&u(h\cdot p)=f(x+h_1, y+h_2)+g(z+{1\over 2}yh_1-{1\over 2}xh_2);\\
&u(h^{-1}\cdot p)=f(x-h_1, y-h_2)+g(z-{1\over 2}yh_2+{1\over 2}xh_2).
\end{aligned}
\]
It is easily seen that in this case
\[
u(p\cdot h^{-1})+u(p\cdot h)=u(h^{-1}\cdot p)+u(h\cdot p),
\]
which immediately yields the equivalence of \eqref{h-convex eqn} and \eqref{left h-convex eqn} in $\mathbb{H}$.
\end{proof}

The following result on preserving of the h-convexity itself is an immediate consequence of Theorem \ref{thm left h-convexity}, Propositions \ref{evenness lemma} and \ref{separability lemma}.
\begin{cor}[H-convexity preserving under evenness or separability]\label{cor convexity}
Assume that $f$ satisfies (A1)--{(A3)} and the concavity condition \eqref{assumption operator} for all $p\in \mathbb{H}$, $h\in \mathbb{H}_0$ and $\xi, \eta\in \mathbb{R}^2$. Let $u\in C(\mathbb{H}\times [0, \infty))$ be the unique viscosity solution of \eqref{special eqn}--\eqref{special initial} satisfying the growth condition (G).  Assume in addition that for any $t\geq 0$, $u(\cdot, t)$ either is an even or vertically even function or has a separable structure as in \eqref{separate}.  If $u_0$ is h-convex in $\mathbb{H}$, then so is $u(\cdot, t)$ in $\mathbb{H}$ for all $t\geq 0$.
\end{cor}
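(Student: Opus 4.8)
The plan is to chain together the right invariant h-convexity preserving property of Theorem~\ref{thm left h-convexity} with the equivalence between h-convexity and right invariant h-convexity furnished by Propositions~\ref{evenness lemma} and~\ref{separability lemma}. The whole argument is formal: the standing hypothesis that $u(\cdot, t)$ is even, vertically even, or of the separable form~\eqref{separate} for \emph{every} $t \geq 0$ supplies precisely the symmetry needed to pass between the two notions of convexity at both the initial time and at a general later time.

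First I would observe that taking $t = 0$ in the hypothesis shows that $u_0 = u(\cdot, 0)$ is itself even, vertically even, or separable. Since $u_0$ is assumed h-convex, I would apply whichever of Proposition~\ref{evenness lemma} (for the even or vertically even case) or Proposition~\ref{separability lemma} (for the separable case) matches the symmetry at hand, and thereby conclude that $u_0$ is right invariant h-convex in $\mathbb{H}$.

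Next, with the initial datum now known to be right invariant h-convex, and with $f$ satisfying (A1)--(A3) together with the concavity condition~\eqref{assumption operator}, I would invoke Theorem~\ref{thm left h-convexity} directly to propagate this property forward in time. This gives that $u(\cdot, t)$ is right invariant h-convex for every $t \geq 0$.

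Finally, I would use the symmetry hypothesis a second time, now applied to $u(\cdot, t)$ at a fixed $t > 0$: since $u(\cdot, t)$ is again even, vertically even, or separable, the appropriate one of Proposition~\ref{evenness lemma} or Proposition~\ref{separability lemma} converts the right invariant h-convexity of $u(\cdot, t)$ back into ordinary h-convexity, completing the proof. I do not expect any genuine obstacle here; the only point requiring care is that the equivalence propositions must be available at \emph{both} ends of the argument, which is exactly why the symmetry is assumed for all $t \geq 0$ and not merely at $t = 0$.
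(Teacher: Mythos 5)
Your argument is correct and is exactly the chain the paper intends: the paper states the corollary as an immediate consequence of Theorem \ref{thm left h-convexity} together with Propositions \ref{evenness lemma} and \ref{separability lemma}, which is precisely your three-step conversion--propagation--conversion. No discrepancy to report.
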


\subsection{More examples}\label{sec example}
In this section, we provide more examples, where the h-convexity is preserved. 

\begin{example}
Let $u_0(x, y, z)=(x^2+y^2)^2-8z^2$. It is not difficult to see that $u_0$ is an h-convex function in $\mathbb{H}$. Consider the heat equation 
\begin{equation}\label{heat eqn}
u_t-\Delta_H u=0 \quad \text{in $\mathbb{H}\times (0, \infty)$}
\end{equation}
with $u(\cdot , 0)=u_0$ in $\mathbb{H}$, where $\Delta_H$ denotes the horizontal Laplacian operator in the Heisenberg group, i.e., $\Delta_H u=\tr(\nabla_H^2 u)^\ast$. The unique solution of \eqref{heat eqn} in this case is 
\begin{equation}\label{heat sol1}
u(x, y, z, t)=(x^2+y^2)^2-8z^2+12(x^2+y^2)t+24t^2
\end{equation}
for all $(x, y, z)\in \mathbb{H}$ and $t\geq 0$ and it actually preserves the h-convexity of the initial value $u_0$. 
\end{example}

\medskip

\begin{example}
The solution as in \eqref{heat sol1} looks special, since it can be written as the sum of a function of $x, y, t$ and a function of $z$. A more complicated solution of the heat equation \eqref{heat eqn} is 
\begin{equation}\label{heat sol2}
u(x, y, z, t)=(x^2+y^2)z^2+{1\over 24}(x^2+y^2)^3+(4z^2+2(x^2+y^2)^2)t+17 (x^2+y^2) t^2+{68\over 3}t^3
\end{equation}
which contains mixed terms of $x, y$ and $z$. By direct calculation, one can also show that $u(\cdot, t)$ satisfies \eqref{v-convex eqn} in $\mathbb{H}$ in the classical sense for everywhere $t\geq 0$.  
\end{example}

\medskip

\begin{example}\label{example mcf}
We recall another example in \cite{FLM1} for the level-set mean curvature flow equation in $\mathbb{H}$.  The equation is of the form 
\begin{equation}
u_t-|\nabla_H u| \dive_H \left( \nabla_H u\over |\nabla_H u|\right)=0 \quad \text{ in $\mathbb{H}\times (0, \infty)$},
\end{equation}
where $\dive_H$ stands for the horizontal divergence operator in the Heisenberg group.
An explicit solution is 
\[
u(x, y, z, t)=(x^2+y^2)^2+16z^2+12(x^2+y^2)t+12t^2.
\]
This is also an example of h-convexity preserving but unfortunately is not covered by our current results.
\end{example}

%%%%%%%%%%%%%%%%%%%%%%%%%%%%%%%%%%%%%%

%%%%%%%%%%%%%%%%%%%%%%%%%%%%%%%%%%%%%
\bibliographystyle{abbrv}%
\bibliography{bib_liu}%

\end{document}